\newcommand{\mychar}[1]{%
\raisebox{-4pt}{
  \begingroup\normalfont
  \includegraphics[height=0.6cm]{#1}%
  \endgroup}
}
\newcommand{\R}{\mathbb{R}}
\newcommand{\Z}{\mathbb{Z}}
\newcommand{\N}{\mathbb{N}}
\newcommand{\Ca}{\mathcal{C}}
\newcommand{\D}{\mathcal{D}}
\newcommand{\Web}{\mathbf{Web}}
\newcommand{\Ob}{\operatorname{Ob}}
\newcommand{\Hom}{\operatorname{Hom}}
\newcommand{\Mor}{\operatorname{Mor}}
\renewcommand{\ker}{\operatorname{ker}}
\newcommand{\Dom}{\operatorname{Dom}}
\numberwithin{equation}{section}
\theoremstyle{plain} 
\newtheorem{thm}[equation]{Theorem}
\newtheorem{lem}[equation]{Lemma}
\theoremstyle{definition}
\newtheorem{defn}[equation]{Definition}
\theoremstyle{remark}
\newtheorem{rem}[equation]{Remark}
\newtheorem{ex}[equation]{Example}
\begin{document}

\title[An extension of the quantum $sl(3)$-invariant to tangles]{A presentation for the category of $sl(3)$ webs and an extension of the quantum $sl(3)$-invariant to tangles}

\author{Nipun Amarasinghe} 
\address{Department of Mathematics, California State University, Fresno, CA 93740, USA}


\email{NipunVAmarasinghe@gmail.com}


\subjclass[2020]{57K14, 18M05}


\keywords{presentations, $sl(3)$-polynomial for links, tangles, tensor categories}

\begin{abstract}
We extend the $sl(3)$-polynomial invariant for links to tangles. Motivated by Kuperberg's construction of this invariant via planar trivalent graphs, we first define a category of $sl(3)$ webs and its sister linear category, and describe them via generators and relations. Then we define a functor from the category of oriented tangles to the linear category of $sl(3)$ webs, which yields an invariant for tangles and allows us to recover the $sl(3)$-polynomial invariant for links.
\end{abstract}

\maketitle


\section{Introduction}
\label{sec:intro}

For any simple Lie algebra $\mathfrak{g}$, the representation theory of its quantized universal enveloping algebra, $U_q(\mathfrak{g})$, gives us an invariant for links that can be taken to be $\Z[q, q^{-1}]$-valued; for details, we refer the reader to the work by Reshetikhin and Turaev in~\cite{Resh}. In the case of the Lie algebra $\mathfrak{sl}_3(\mathbb{C})$, which we will just denote as $sl(3)$, we get the $sl(3)$-polynomial invariant for links through this method. In~\cite{Kup}, Kuperberg uses a calculus of planar trivalent graphs, also called \textit{closed webs}, to both describe the representation theory of $U_q(sl(3))$ and come up with a way to calculate the $sl(3)$-polynomial of a given link through combinatorial methods. This will be summarized in greater detail in Section~\ref{sl3link}, where we use a normalization of Kuperberg's invariant given by Khovanov in~\cite{Khov} and describe the $sl(3)$-polynomial invariant of a link as a state sum evaluation of link flattenings, which are closed $sl(3)$ webs.

 An \textit{oriented tangle} is a smooth embedding in $\R^2 \times [0,1]$ of a finite disjoint union of circles and intervals with orientation, such that the manifold boundary of this embedding is a subset of the lines $\R\times \{0\} \times \{0\}$ and $\R \times \{0\} \times \{1\}$ (with some extra conditions). For each tangle, we can define a \textit{source sequence} and \textit{target sequence} which are finite strings of the symbols $``+"$ and $``-"$, that contain information about the boundary points of a tangle on the lines $\R\times \{0\} \times \{0\}$ and $\R \times \{0\} \times \{1\}$, respectively. We can then define a strict tensor category of oriented tangles $\mathbf{OTa}$ by thinking of each tangle as a morphism from its source sequence to its target sequence. In $\mathbf{OTa}$, the composition of two composable tangles $L_1\circ L_2$ is the tangle acquired by stacking $L_1$ on top of $L_2$ and the tensor product of any two tangles $L_1 \otimes L_2$ is the tangle with $L_2$ to the right of $L_1$. For more information about the category of oriented tangles, we refer the reader to Turaev's work in~\cite{Tur}. The motivation of this paper is to use methods from the theory of strict tensor categories to extend the $sl(3)$-invariant to tangles and describe it using only category theory.

Motivated by the approach given by East in~\cite{East}, in Section~\ref{STC} we define generators and relations for such categories  via algebraic methods using free and quotient tensor categories. In Section~\ref{webdefine}, we define a web to be an oriented graph in $S:= R \times [0,1]$ with vertices that are either sinks or sources, where the vertices in $S^\circ$ are trivalent and vertices in $\partial S$ are univalent (with some extra conditions). Similar to the case of tangles, we may enrich this set of webs with the structure of a strict tensor category by defining the composition and tensor product of webs. This acts as a categorical generalization of the closed webs described by Kuperberg in~\cite{Kup}, analogous to how tangles are a categorical generalization of links. In Section~\ref{webpres} we utilize the tools given in Section~\ref{STC} to give a presentation of the category of webs, \textbf{Web}, in terms of generators and relations. In Section~\ref{skeinmod}, we then use this presentation to describe a skein module of formal linear combinations of webs modulo relations given by the Kuperberg bracket, a closed web invariant used in Kuperberg's construction of the $sl(3)$-polynomial invariant of links. Elements from these modules then naturally form a strict tensor category, $\textbf{LWeb}$, the linear web category. Finally, using a presentation for the category $\textbf{OTa}$ given in~\cite{Tur}, we define an invariant for tangles via a functor $F: \textbf{OTa} \to \textbf{LWeb}$, given by a specific assignment on the generators of $\mathbf{OTa}$. If $L \in \textbf{OTa}$ is a link, that is $L$ is a morphism from the empty string to the emtpy string, $F(L)$ is an equivalence class of linear  $\Z[q, q^{-1}]$-combinations of closed webs that contains a unique element of $\Z[q, q^{-1}]$, which is the $sl(3)$ polynomial invariant for $L$.

 The target audience for this paper are young researchers interested in categorical methods in knot theory. For this reason, we have chosen to state and prove many basic known results regarding strict tensor categories and their presentations in order to both keep this paper as self contained as possible and to provide proofs in greater detail than in the standard references. We only assume basic knowledge of topology and category theory. 

\section{Strict tensor categories} \label{STC}

Ultimately, we wish to describe an invariant $F$ of oriented tangles. In order to define such an invariant neatly, we use a strict tensor category structure on the set of homotopy classes of oriented tangles, denoted $\mathbf{OTa}$, and define $F$ as a functor from $\mathbf{OTa}$ that can be uniquely determined by where $F$ sends generators of $\mathbf{OTa}$. We now provide the necessary background on strict tensor categories and their presentations so that we may be able to give a presention of the codomain of $F$ and justify defining $F$ uniquely via an appropriate map of generators.

Let $\mathcal{C}$ be a category and let $f: X \to Y$ be a morphism in $\mathcal{C}$. We will use the following notation:
\begin{itemize}
\item $\Ob(\mathcal{C})$ is the class of objects in $\mathcal{C}$
\item $\Mor(\mathcal{C})$ is the class of morphisms in $\mathcal{C}$
\item $\Hom_\mathcal{C}(X, Y)$ is the class of morphisms from $X$ to $Y$
\item $\mathbf{s}(f) = X$ is the \textit{source} of $f$
\item $\mathbf{t}(f) = Y$ is the \textit{target} of $f$ .
\end{itemize}

For the purposes of this paper, every category we consider is small, so we assume that $\Ob(\mathcal{C})$, $\Mor(\mathcal{C})$, and $\Hom_\mathcal{C}(X, Y)$ are all set sized. 
\begin{defn}
A \textit{strict tensor category} (or a \textit{strict monoidal category}) is a triple $(\mathcal{C},\otimes ,O)$ consisting of a category $\mathcal{C}$, a covariant bifunctor $\otimes: \Ca \times \Ca \to \Ca$ where we denote $\otimes (a, b) = a \otimes b$, and an object $O \in \Ob(\Ca)$, such that the following hold for all $f, g, h \in \Mor(\Ca)$:
\begin{itemize}
\item $f\otimes (g \otimes h) = (f\otimes g) \otimes h$
\item $f \otimes Id_O = f = Id_O \otimes f.$
\end{itemize}
Note that the above equalities imply that the relations $X\otimes (Y \otimes Z) = (X\otimes Y) \otimes Z$ and $X \otimes O = X = O \otimes X$ also hold for all objects $X, Y, Z \in \Ob(\mathcal{C})$.
\end{defn}
Although there is a more general notion of a tensor category, unless otherwise specified, we use the term tensor category to refer to a strict tensor category. With the definition of a tensor category at hand, we consider now functors between tensor categories, which will allow us to create a category of tensor categories.

\begin{defn}
Let $S$ be any set. We define $S-\mathbf{Cat^\otimes}$ as the small category whose objects are tensor categories $\Ca$, such that $\Ob(\Ca) = S$, and whose morphisms are tensor-product-preserving functors that act as the identity on objects. To be specific, for any morphism  $\phi: (\Ca, \otimes_{\Ca}, O_\Ca) \to (\D, \otimes_{\D}, O_\D)$ in $S-\mathbf{Cat^\otimes}$  and any $f, g \in \Mor(\Ca)$, the following holds: 
\[ \phi(f \otimes_{\Ca} g) = \phi(f) \otimes_\D \phi(g).\]
\end{defn}

We note that $S-\mathbf{Cat^\otimes}$ is a subcategory of $\mathbf{Cat}$, the category of all small categories. The category $S-\mathbf{Cat^\otimes}$ could be defined as a more general category whose objects are (strict) tensor categories by not requiring that all the categories have the same object set and that the functors are the identity on objects. However, adding these restrictions will suit our purposes better, as it does make the category simpler and it has nice properties that a more general category may not have. In particular, the following hold in this category. 

\begin{itemize}
\item Given a morphism $\phi: \Ca \to \D$ in $S-\mathbf{Cat^\otimes}$, the image of $\phi$, $\phi(\Ca)$, forms a tensor subcategory of $\D$ and is in $S-\mathbf{Cat^\otimes}$.
\item Given a morphism $\phi: \Ca \to \D$ in $S-\mathbf{Cat^\otimes}$, if $\phi$ is a bijective map for the sets $\Mor(\Ca)\to \Mor(D)$, then $\phi$ is an isomorphism in $S-\mathbf{Cat^\otimes}$.
\end{itemize} 

 It is to our interest to construct a presentation of the tensor category $\mathbf{Web}$ via generators and relations, so we must first define what such a presentation entails in a generic tensor category.

\begin{defn} \label{RelCat}
Given a tensor category $\Ca$, a \textit{tensor congruence} on $\Ca$, denoted $R$, is an equivalence relation on $\Mor(\Ca)$ satisfying the following properties:
\begin{enumerate}
\item[(i)] For any $(f,g) \in R$, $f$ and $g$ must have the same source and target.
\item[(ii)]For any $(f,g),(h,k) \in R$, $(f\circ h, g\circ k)\in R$, if these compositions exist. 
\item[(iii)]For any $(f,g),(h ,k) \in R$, $(f \otimes h, g \otimes k)\in R$.
\end{enumerate}
\end{defn}

\begin{rem}
Note that tensor congruences have the property that for any collection of tensor congruences $R_\alpha \subseteq \Mor(\Ca) \times \Mor(\Ca)$ for $\alpha \in I$, where $I$ is an indexing set, then $\bigcap_{\alpha \in A} R_\alpha$ is a tensor congruence. Thus, given any subset $A$ of $\Mor(\Ca) \times \Mor(\Ca)$ that satisfies property (i) of a tensor congruence in Definition~\ref{RelCat}, there exists a minimal tensor congruence that contains $A$. We will refer to this tensor congruence as the \textit{tensor congruence generated by $A$}, and denote it by $\langle A \rangle$.
\end{rem}
\begin{rem}
	For a subset $A \subset \Mor(\Ca) \times \Mor(\Ca)$ satisfiying property (i) of a tensor congruence in Definition~\ref{RelCat}, we may use the notation $f \sim g$ for an element $(f, g ) \in A$ as this notation looks more natural when we eventually start using tensor congruences to quotient and present categories. 
\end{rem}

\begin{ex} \label{ker}
Given a morphism $\phi: \Ca \to \D$ in $S-\mathbf{Cat^\otimes}$, one tensor congruence that is particularly important is
$$\ker(\phi) := \{(f,g) \in \Mor(\Ca) \times \Mor(\Ca): \phi(f) = \phi(g)\}.$$
We now check that $\ker(\phi)$ satisfies Definition~\ref{RelCat}.
\begin{itemize}
\item[(i)] Since $\phi$ must be the identity on objects, $\phi(f) = \phi(g)$ implies that $$\mathbf{s}(f) = \mathbf{s}(\phi(f)) = \mathbf{s}(\phi(g)) = \mathbf{s}(g),$$ 
and likewise $$\mathbf{t}(f) = \mathbf{t}(\phi(f)) = \mathbf{t}(\phi(g)) = \mathbf{t}(g).$$
\item[(ii)]For any $(f,g),(h,k) \in \ker(\phi)$ such that $f \circ h$ and $g \circ k$ exist, we have that $$\phi( f \circ h) = \phi(f) \circ \phi(h)  = \phi(g) \circ \phi(k)= \phi(g \circ k),$$ 
and thus $(f \circ h, g \circ k)\in \ker(\phi).$
\item[(iii)]For any $(f,g),(h,k) \in \ker(\phi)$, $$\phi(f \otimes h)= \phi(f)\otimes \phi(h)=  \phi(g)\otimes \phi(k) =\phi( g \otimes k),$$ so $(f \otimes h, g \otimes k)\in \ker(\phi).$
\end{itemize} 
Therefore, $\ker(\phi)$ is a tensor congruence on the category $\Ca$.
\end{ex}

\begin{defn}
Given a tensor category $(\mathcal{C}, \otimes, O_\Ca)$ and a tensor congruence $R$ on $\Ca$, the \textit{quotient category}, $\Ca/R$, is the tensor category ($\Ca/R, \otimes, O_{\Ca/R})$ consisting of the following data:
\begin{itemize}
\item \textbf{Objects:} $Ob(\Ca/R) = Ob(\Ca)$.
\item \textbf{Morphisms ($X \to Y$):} These are equivalence classes under $R$ of morphisms in $Hom_\Ca(X, Y)$. Given an $f\in Mor(\Ca)$, we use $\overline{f}$ to denote the equivalence class of $f$ under $R$.
\item \textbf{Composition:} $\overline{f} \circ \overline{g} = \overline{f\circ g}$. This is well-defined by property (ii) of a tensor congruence.
\item \textbf{Tensor Product:} $\overline{f} \otimes \overline{g} = \overline{f \otimes g}$. This is well-defined by property (iii) of a tensor congruence.
\item \textbf{Identity ($X$):} $Id_{X} = \overline{Id_X}$.
\item \textbf{Tensor Unit:} $O_{\Ca/R} = \overline{O_\Ca}$.
\end{itemize}
\end{defn}

 This terminology for the quotient of a tensor category $\Ca$ and the kernel of a functor whose domain is $\Ca$ will become relevant in the context of our next theorem. The relation between the two concepts for categories is similar to the relation between quotients and kernels in the context of groups, rings, and modules, as it also yields a first isomorphism theorem of sorts. 

\begin{thm} \label{fit}
For any morphism $\phi: \Ca \to \D$ in the category $S-\mathbf{Cat^\otimes}$, the following holds:
$$\Ca/\ker(\phi) \cong \phi(\Ca).$$
\begin{proof}
We claim that the map $\overline{\phi}: \Ca/\ker(\phi) \to \phi(\Ca)$ defined on morphisms by
 $$\overline{\phi}: \Mor(\Ca/\ker(\phi)) \to \Mor(\phi(\Ca)), \text{where} \,\, \overline{f} \mapsto \phi(f) \text{ for all } \overline{f} \in \Mor(\Ca/\ker(\phi)),$$ 
 and defined as the identity on objects, is an isomorphism in $S-\mathbf{Cat^\otimes}$. 
 
 To show that this is indeed a morphism in $S-\mathbf{Cat^\otimes}$, we note that the map is well defined, since $\overline{f}= \overline{g}$ if and only if $\phi(f) = \phi(g)$.
 Moreover, by the definition of composition, tensor product, and identity in the quotient category, we see that this map is indeed a tensor-product-preserving functor that is the identity on objects.
 
  To show that this morphism is an isomorphism in $S-\mathbf{Cat^\otimes}$, it suffices to show that the function $\overline{\phi}: \Mor(\Ca/\ker(\phi)) \to \Mor(\phi(\Ca))$ is a bijection.
   Surely, this is the case since for any $\phi(f) \in \Mor(\phi(\Ca))$, $\overline{\phi}(\overline{f}) = \phi(f)$ 
   and whenever $\phi(f) = \phi(g)$, we must have $\overline{f}= \overline{g}$.
    Therefore, $\overline{\phi}: \Ca/\ker(\phi) \to \phi(\Ca)$ is an isomorphism in $S-\mathbf{Cat^\otimes}$.
\end{proof}
\end{thm}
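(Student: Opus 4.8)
The plan is to imitate the classical first isomorphism theorem for groups and rings, extracting the isomorphism directly from $\phi$ rather than building it from scratch. I would define a candidate $\overline{\phi}: \Ca/\ker(\phi) \to \phi(\Ca)$ acting as the identity on objects and sending each class $\overline{f}$ to $\phi(f)$. Because both categories lie in $S-\mathbf{Cat^\otimes}$ and share the object set $S$, the behavior on objects is forced and needs no argument, so all the content concerns the assignment on morphisms.

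The first genuine step is well-definedness. Since $\ker(\phi)$ is by construction the relation identifying $f$ and $g$ exactly when $\phi(f) = \phi(g)$, I obtain the biconditional that $\overline{f} = \overline{g}$ if and only if $\phi(f) = \phi(g)$. I would isolate this equivalence at the outset, since it does double duty: the forward direction yields well-definedness of $\overline{\phi}$ on representatives, and the reverse direction will supply injectivity at the end.

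Next I would verify that $\overline{\phi}$ is a morphism in $S-\mathbf{Cat^\otimes}$, that is, a tensor-product-preserving functor that is the identity on objects. Invoking the quotient's definitions $\overline{f} \circ \overline{g} = \overline{f \circ g}$ and $\overline{f} \otimes \overline{g} = \overline{f \otimes g}$, together with the fact that $\phi$ already preserves composition, tensor product, identities, and the unit, each required compatibility collapses to the corresponding property of $\phi$; for instance $\overline{\phi}(\overline{f} \circ \overline{g}) = \phi(f \circ g) = \phi(f) \circ \phi(g)$, and analogously for $\otimes$.

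To finish, rather than construct an inverse by hand I would appeal to the structural property of $S-\mathbf{Cat^\otimes}$ recorded just before the theorem: a morphism that is bijective on morphism sets is automatically an isomorphism. Hence it suffices to show that $\overline{\phi}$ is a bijection on $\Mor$. Surjectivity is immediate, since every morphism of $\phi(\Ca)$ has the form $\phi(f) = \overline{\phi}(\overline{f})$, and injectivity is precisely the reverse direction of the crux equivalence from the well-definedness step. I do not anticipate a real obstacle here, since $\ker(\phi)$ is engineered to collapse exactly the fibers of $\phi$; the only points demanding care are bookkeeping, namely confirming that the quotient's composition and tensor product are the structures making $\overline{\phi}$ functorial, and that $\phi(\Ca)$ inherits its composition and tensor product from $\D$, so that equalities tested in $\phi(\Ca)$ coincide with those in $\D$.
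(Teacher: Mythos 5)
Your proposal is correct and follows essentially the same route as the paper: the same map $\overline{f} \mapsto \phi(f)$, the same well-definedness argument from the defining equivalence of $\ker(\phi)$, and the same appeal to the fact that a morphism of $S-\mathbf{Cat^\otimes}$ that is bijective on morphism sets is an isomorphism. No gaps.
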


Now we wish to define a notion of a free tensor category. When working with general small categories, we define the free category generated by a small directed multi-graph $G$ to be the category with objects the vertices of $G$ and with morphisms the strings of composable edges, including empty strings on a vertex, where the composition is concatenation of these strings; see~\cite{CWM}. However, an arbitrary formal expression involving both composition and tensor product takes a little more work to define. 

\begin{defn} \label{alphabet}
Let $G$ be a small directed multi-graph, where by small we mean that we only require the number of vertices and edges in this graph to be set sized. Additionally, suppose that the vertex set of $G$ has a monoid structure $(V(G), \otimes, \varnothing)$. Then the \textit{words in the alphabet of $G$}, denoted $A(G)$, is the set of of formal expressions equipped with source and target functions $\mathbf{s}$, $\mathbf{t} : A(G)\to V(G)$, constructed only by requiring the following.
\begin{itemize}
\item[(A1)] For any vertex $v$, the empty path on $v$, denoted by $\iota_v$, is in $A(G)$ and satisfies $\mathbf{s}(\iota_v)= \mathbf{t}(\iota_v)=v$.
\item[(A2)] For any edge $e$ from $v_1$ to $v_2$,  $e$ is in $A(G)$ with $\mathbf{s}(e)= v_1$ and $\mathbf{t}(e)=v_2$.
\item[(A3)] For any two words $a$ and $b$ with $\mathbf{t}(a) = \mathbf{s}(b)$, the formal expression $(b \circ a)$ is in $A(G)$, with $\mathbf{s}((b \circ a)) = \mathbf{s}(a)$ and $\mathbf{t}((b \circ a)) = \mathbf{t}(b)$.
\item[(A4)] For any two words $a$ and $b$, the formal expression $(a \otimes b)$ is in $A(G)$, with $\mathbf{s}((a \otimes b)) = \mathbf{s}(a)\otimes \mathbf{s}(b) $ and $\mathbf{t}((a \otimes b)) = \mathbf{t}(a)\otimes \mathbf{t}(b) $.
\end{itemize}
More explicitly, the elements of $A(G)$ are all formal expressions that are constructed by applying a finite sequence of the `moves' (A3) and (A4) on the elements required by (A1) and (A2). In addition to this, to any element of $A(G)$ we assign a natural number, called the word's \textit{rank}. We do this inductively, where all words of rank 1 are exactly those elements of $A(G)$ given by (A1) and (A2) above. Then, given all words of rank less than or equal to some $n \in \N$, we define all words of rank at most $n+1$ as the words formed by applying the construction in (A3) or (A4) to words of rank at most $n$.

Similarly, we also inductively define the \textit{collection of subwords} in a word in the alphabet $A(G)$. If a word is of rank 1, then the only subword is itself. However, having defined subwords for all words of rank up to some $n \in \N$, then any word $w$ of rank $n+1$ is of the form $a \circ b$ or $a \otimes b$, where $a$ and $b$ are words of rank at most $n$. Then we define the collection of subwords of the word $w$ to be the word itself along with all subwords of $a$ and all subwords of $b$. 	
\end{defn}

\begin{rem}
	Note that because of the parenthesis in the moves (A3) and (A4) moves are part of the formal expression. Thus, we never have associativity of composition or tensor product. However, one consequence of this is that if $w \in A(G)$ and $w = a \otimes b = a' \otimes b'$ for $a, b, a', b' \in A(G)$ then $a = a'$ and $b= b'$. A similar statement holds for composition.
\end{rem}

The following definition will give us notation to deal with elements of $A(G)$ better and define what it means to replace a subword. 
\begin{defn}
	Let $w, a \in A(G)$. A \textit{word decomposition of $w$ starting at $a$} is a finite sequence (possibly empty) of functions $\{f_k: S_k\subseteq A(G) \to A(G)\}_{k=1}^n$ such that 
	$f_k \circ f_{k-1} \circ \dots \circ f_1(a) \in S_{k+1}$,  for each $k \in \{1, \dots , n\},    w = f_n \circ f_{n-1} \circ \cdots \circ f_1(a),$ and each $f_k$ ($1 \leq k \leq n$) is one of the following functions:
	\begin{itemize}
		\item[(i)] $A(3)^L_b: \{c \in A(G): \mathbf{t}(c) = \mathbf{s}(b)\} \to A(G)$; $c \mapsto b  \circ c$\\
		\item[(ii)] $A(3)^R_b: \{c \in A(G): \mathbf{t}(b) = \mathbf{s}(c)\} \to A(G)$; $c \mapsto c \circ b$ \\
		\item[(iii)] $A(4)^L_b: A(G) \to A(G)$; $c \mapsto b \otimes c$\\
		\item[(iv)] $A(4)^R_b: A(G) \to A(G)$; $c \mapsto c \otimes b.$
	\end{itemize}
\end{defn}

\begin{rem}
	By the definition of $A(G)$, for any $w \in A(G)$, there exists a word $a$ of rank 1 and a decomposition of $w$ starting at $a$. Similarly, for any $w \in A(G)$, the empty sequence is a word decomposition of $w$ starting at $w$.
\end{rem}

 One use for word decompositions is that it gives us an equivalent definition of the subwords of a given word $w \in A(G)$.
\begin{lem}
	Let $w \in A(G)$. A word $a \in A(G)$ is a subword of $w$ if and only if there exists a word decomposition of $w$ starting at $a$. 
\end{lem}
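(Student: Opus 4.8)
The plan is to prove the two implications separately, each by a straightforward induction, with the directions using different induction parameters. For the ``subword $\Rightarrow$ decomposition'' direction I would induct on the rank of $w$, and for the converse I would induct on the length $n$ of the word decomposition. The engine in both cases is the recursive definition of subwords together with the observation that each of the four move functions $A(3)^L_b, A(3)^R_b, A(4)^L_b, A(4)^R_b$ realizes exactly one of the two building operations $\circ$ and $\otimes$ on the ``outer'' constituent of a word.

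First I would handle the forward direction. If $w$ has rank $1$, its only subword is $w$ itself, and by the preceding remark the empty sequence is a decomposition of $w$ starting at $w$. For the inductive step, write a rank-$(n+1)$ word as $w = u \circ v$ or $w = u \otimes v$ with $u, v$ of rank at most $n$; if $a = w$ the empty decomposition suffices, and otherwise $a$ is a subword of $u$ or of $v$. By the inductive hypothesis there is a decomposition of the relevant constituent starting at $a$, and I would obtain a decomposition of $w$ by appending a single move function: $A(3)^R_v$ or $A(4)^R_v$ when $a$ lies in $u$, and $A(3)^L_u$ or $A(4)^L_u$ when $a$ lies in $v$, choosing the $A(3)$ variant for a composition and the $A(4)$ variant for a tensor. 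The one point requiring care---and the main obstacle---is verifying that the appended function is actually defined on the word produced so far, i.e. that the intermediate word lies in the domain $S_{k+1}$ of the next function. For the tensor moves the domain is all of $A(G)$, so nothing is needed; for the composition moves I would check the source/target compatibility explicitly, using that $w = u \circ v$ exists precisely when $\mathbf{t}(v) = \mathbf{s}(u)$, which is exactly the domain condition for both $A(3)^R_v$ applied to $u$ and $A(3)^L_u$ applied to $v$.

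For the converse I would induct on the length $n$ of the decomposition. When $n = 0$ we have $w = a$, and $a$ is a subword of itself. For the inductive step, set $w' = f_n \circ \cdots \circ f_1(a)$, so that $\{f_k\}_{k=1}^n$ is a decomposition of $w'$ starting at $a$ and hence, by hypothesis, $a$ is a subword of $w'$. Since $w = f_{n+1}(w')$ and each move function writes $w$ as $w' \circ b$, $b \circ w'$, $w' \otimes b$, or $b \otimes w'$, the word $w'$ is an immediate constituent of $w$; the recursive definition of subwords then lists every subword of $w'$ among the subwords of $w$, so $a$ is a subword of $w$. I expect this direction to be essentially immediate once the forward direction is in place, with all the real bookkeeping concentrated in the domain checks of the forward induction described above.
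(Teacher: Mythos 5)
Your proposal is correct and follows essentially the same route as the paper: the forward direction by induction on the rank of $w$, appending a single move function $A(3)^R_v$, $A(3)^L_u$, $A(4)^R_v$, or $A(4)^L_u$ to a decomposition of the relevant constituent, and the converse by walking back along the chain of intermediate words (your induction on the length $n$ is just the formalized version of the paper's transitivity argument on $a_0, a_1, \ldots, a_n$). Your explicit handling of the case $a = w$ in the inductive step and your verification of the domain conditions for the composition moves are small points of care that the paper glosses over, but they do not change the argument.
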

\begin{proof}
	Suppose that $a$ is a subword of $w$, we show that there is a word decomposition of $w$ starting at $a$ by induction on the rank of $w$. If $w$ is rank $1$, then $a = w$ and so the empty sequence is a word decomposition of $w$ starting at $a$. Now suppose that for every word $w'$ of rank less than $n$ and for any subword $a'$ of $w'$, there is a word decomposition of $w'$ starting at $a'$. Suppose $w$ is of rank $n$. Then there exists $b,c \in A(G)$ of rank less than $n$ such that either $w = b \otimes c$ or $w = b \circ c$. In the case that $w = b \circ c$, suppose without loss of generality that $a$ is a subword of $b$. Then there exists a word decomposition of $b$ starting at $a$, say $\{f_k\}_{k=1}^n$. If we define $f_{n+1} := A(3)_c^R$, we have 
	$$f_{n+1} \circ f_n \circ \cdots f_1(a) = f_{n+1}(b) = b \circ c = w,$$ 
and thus $\{f_k\}_{k=1}^{n+1}$ is a word decomposition of $w$ starting at $a$. The case for $w = b \otimes c$ is similar. This concludes the induction. 
	
	Conversely, suppose that there exists a word decomposition of $w$ starting at $a$, say $\{f_k\}_{k=1}^n$. Then define 
	$$a_k = \begin{cases}
		f_k \circ f_{k-1} \circ \cdots \circ f_1(a), & \text{if } k \in \{1,2, \cdots n\}\\
		a& \text{if } k=0 
		\end{cases} $$ 
	so that $a_0 = a$ and $a_n = w$. Then regardless of whether $f_{k}$ is of the form $A(3)^R_c$, $A(3)^L_c$, $A(4)^R_c,$ or $A(4)^L_c$, $a_{k-1}$ is a subword of $a_{k} = f_k(a_{k-1})$, for all $k \in \{1, \cdots, n\}$. Therefore, by transitivity of the subword relation, $a_0 = a$ is a subword of $a_n = w$. This concludes the proof.
\end{proof}

The set $A(G)$ forms a good start for defining the morphisms of free tensor categories. However, we want these words to satisfy the properties of the product and composition in a tensor category. To endow these words with those equalities, we equip $A(G)$ with the following equivalence relation.
\begin{defn}\label{replace}
Let $w_1, w_2 \in A(G)$, $a_1$ a subword of $w_1$ and $a_2$ a subword of $w_2$. Then we say $w_2$ is obtained from $w_1$ by \textit{replacing the subword $a_1$ with $a_2$} if and only if there exists a word decomposition of $w_1$, $\{f_k\}_{k=1}^n$, starting at $a_1$ such that $\{f_k\}_{k=1}^n$ is also a word decomposition of $w_2$ starting at $a_2$. 
\end{defn} 

\begin{defn}\label{EquivWords}
Two words, $w_1$ and $w_2$, in $A(G)$ are said to be \textit{equivalent} if and only if there exists a finite sequence of words $w_1 = d_1, d_2, \ldots ,d_n = w_2$, such that each $d_i$ is obtained from $d_{i-1}$ by replacing a subword $a_1$ with $a_2$, such that $\{a_1, a_2\}$ is of one of the following forms
\begin{enumerate}
\item $\{(a \circ b) \circ c, a \circ (b \circ c)\},$
\item $ \{a \circ \iota_{\mathbf{s}(a)} ,  a\} $ or $\{a,\iota_{\mathbf{t}(a)} \circ a \},$
\item $\{(a \circ a') \otimes (b \circ b')  ,  (a \otimes b) \circ (a' \otimes b')\},$
\item $\{(\iota_{v_1}) \otimes (\iota_{v_2})  ,  \iota_{v_1\otimes v_2}\},$
\item $\{(a \otimes b) \otimes c, a \otimes (b \otimes c)\},$
\item $\{a \otimes \iota_\varnothing ,  a\}$ or $\{a,  \iota_\varnothing \otimes a\}.$
\end{enumerate}
\end{defn}

\begin{rem}
Equivalence of words is an equivalence relation on the words of the alphabet $A(G)$. 
\end{rem} 

\begin{rem}
	Due to relations (1) and (5) in Definition~\ref{EquivWords}, if $a,b,c \in A(G)$ then we may use $a \otimes b \otimes c$ to denote either $(a \otimes b) \otimes c$ or $a \otimes (b \otimes c)$ and $a \circ b \circ c$ to for either $(a \circ b) \circ c$ or $a \circ (b \circ c)$, if it exists. 
\end{rem}
The following lemma is a restatement of Lemma 2.3.3 in ~\cite{Tur} and will later help in dealing with arbitrary elements in $A(G)$ up to equivalence.
\begin{lem}\label{generalform}
Any word of an alphabet is equivalent to a word of the form
\[(\iota_{v_1}\otimes a_1 \otimes \iota_{u_1})\circ (\iota_{v_2}\otimes a_2 \otimes \iota_{u_2}) \circ \ldots \circ (\iota_{v_n}\otimes a_n \otimes \iota_{u_n}),\]
for some $n \in \mathbb{N}$, where $a_i$ are words of rank $1$, and $v_i$ and $u_i$ are vertices, for all $1\leq i \leq n$.
\end{lem}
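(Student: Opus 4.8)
The plan is to induct on the rank of $w \in A(G)$ (Definition~\ref{alphabet}), showing that every word is equivalent, in the sense of Definition~\ref{EquivWords}, to a composite of \emph{elementary layers} --- words of the shape $\iota_v \otimes a \otimes \iota_u$ with $a$ of rank $1$. Before starting the induction it is worth recording a \textbf{congruence} property of the equivalence relation: if $b \sim b'$ and $c \sim c'$, then $b \circ c \sim b' \circ c'$ and $b \otimes c \sim b' \otimes c'$ whenever the expressions are defined. This follows because a single subword replacement carrying $b$ to $b'$ replaces a subword $a_1$ of $b$, and $a_1$ is also a subword of $b \circ c$ (resp. $b \otimes c$) by transitivity of the subword relation; appending $A(3)^R_c$ (resp. $A(4)^R_c$) to the witnessing word decomposition shows that the very same replacement performed inside $b \circ c$ (resp. $b \otimes c$) produces $b' \circ c$ (resp. $b' \otimes c$), as required by Definition~\ref{replace}. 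Iterating over the finite chains of replacements gives the claim. I also use throughout that equivalence preserves $\mathbf{s}$ and $\mathbf{t}$, since each of the six relation pairs in Definition~\ref{EquivWords} consists of words with equal source and target.

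For the base case, a word of rank $1$ is an empty path $\iota_v$ or an edge $e$; in either case relation (6) gives $\iota_\varnothing \otimes a \otimes \iota_\varnothing \sim a$, so $w$ is equivalent to a single elementary layer with $v_1 = u_1 = \varnothing$. For the inductive step, suppose $w$ has rank $n+1$, so $w = b \circ c$ or $w = b \otimes c$ with $b, c$ of rank at most $n$; by the inductive hypothesis $b \sim B$ and $c \sim C$ with $B, C$ already in the desired form, and by congruence it suffices to bring $B \circ C$ (resp. $B \otimes C$) into normal form. The composition case is immediate: writing $B = L_1 \circ \cdots \circ L_p$ and $C = M_1 \circ \cdots \circ M_q$ as composites of elementary layers, relation (1) lets us rebracket $B \circ C$ as the single composite $L_1 \circ \cdots \circ L_p \circ M_1 \circ \cdots \circ M_q$, which is of the required form (the relevant sources and targets match because $w = b \circ c$ exists and equivalence preserves $\mathbf{s}, \mathbf{t}$).

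The tensor case $w = b \otimes c$ is the crux. Inserting identities via relation (2) gives $B \otimes C \sim (\iota_{\mathbf{t}(B)} \circ B) \otimes (C \circ \iota_{\mathbf{s}(C)})$, and the interchange relation (3) then yields
\[
B \otimes C \;\sim\; (\iota_{\mathbf{t}(B)} \otimes C) \circ (B \otimes \iota_{\mathbf{s}(C)}),
\]
where one checks the composite is defined because both factors have intermediate object $\mathbf{t}(B) \otimes \mathbf{s}(C)$. It remains to normalize a word of the form $(L_1 \circ \cdots \circ L_p) \otimes \iota_u$, and symmetrically $\iota_t \otimes (M_1 \circ \cdots \circ M_q)$. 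Repeated use of relation (3) together with $\iota_u \circ \iota_u \sim \iota_u$ from relation (2) distributes the tensor factor across the composite, producing $(L_1 \otimes \iota_u) \circ \cdots \circ (L_p \otimes \iota_u)$; and for one layer, relations (5) and (4) collapse $(\iota_{v} \otimes a \otimes \iota_{u'}) \otimes \iota_u$ into $\iota_{v} \otimes a \otimes \iota_{u' \otimes u}$, again an elementary layer. Treating $\iota_t \otimes C$ identically and concatenating the two resulting composites via relation (1) finishes this case and the induction.

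I expect the main obstacle to lie in the tensor case, and specifically in justifying that the interchange and identity relations may legitimately be applied to subwords buried deep inside $B \otimes C$. This is precisely where the congruence property and the characterization of subwords by word decompositions do the heavy lifting: without them one cannot pass freely between "rewriting an entire factor" and "replacing a subword," which is the only operation Definition~\ref{EquivWords} actually permits.
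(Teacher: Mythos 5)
Your proof is correct and follows essentially the same route as the paper's: induction on rank, with the composition case handled by concatenation and the tensor case reduced via $B\otimes C \sim (\iota_{\mathbf{t}(B)}\otimes C)\circ(B\otimes \iota_{\mathbf{s}(C)})$, followed by distributing the identity tensor factor across the layers using interchange and $\iota_u\circ\iota_u\sim\iota_u$. Your explicit verification that the equivalence relation is a congruence for $\circ$ and $\otimes$ (via appending maps to word decompositions) is a point the paper leaves implicit, and is a worthwhile addition.
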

\begin{proof}
We provide a proof by induction on the rank of the word. If a word $w$ is of rank $1,$ then we may write 
$$ w = \iota_\varnothing \otimes w \otimes \iota_\varnothing.$$
We now assume that the claim has been established for words up to rank $k$. If a word $w$ is of rank $k+1$, then we may write $w = a\circ b$ or $w = a \otimes b$, for some words $a$ and $b$ of rank at most $k$. If $w = a \circ b$, then we may rewrite
$$a = (\iota_{v_1}\otimes a_1 \otimes \iota_{u_1})\circ (\iota_{v_2}\otimes a_2 \otimes \iota_{u_2}) \circ \ldots \circ (\iota_{v_n}\otimes a_n \otimes \iota_{u_n})$$
and 
$$b = (\iota_{w_1}\otimes b_1 \otimes \iota_{x_1})\circ (\iota_{w_2}\otimes b_2 \otimes \iota_{x_2}) \circ \ldots \circ (\iota_{w_m}\otimes b_m \otimes \iota_{x_m}),$$
for $a_1, a_2, \ldots, a_n, b_1, b_2, \ldots, b_m$ words of rank 1 and $v_1, v_2, \ldots, v_n, u_1, u_2,\\ \ldots, u_n,
w_1, w_2, \ldots, w_m, x_1, x_2, \ldots, x_m$ vertices. Then 
\begin{align*}
	a \circ b &=  (\iota_{v_1}\otimes a_1 \otimes \iota_{u_1})\circ (\iota_{v_2}\otimes a_2 \otimes \iota_{u_2}) \circ \ldots \circ (\iota_{v_n}\otimes a_n \otimes \iota_{u_n})\\
	&\circ (\iota_{w_1}\otimes b_1 \otimes \iota_{x_1})\circ (\iota_{w_2}\otimes b_2 \otimes \iota_{x_2}) \circ \ldots \circ (\iota_{w_m}\otimes b_m \otimes \iota_{x_m}),
\end{align*}
which is of the desired form.

In the case that $w = a \otimes b$, we have the following equalities:
 $$a \otimes b =(\iota_{\mathbf{t}(a)} \circ a) \otimes (b \circ \iota_{\mathbf{s}(b)}) =  (\iota_{\mathbf{t}(a)} \otimes b) \circ (a \otimes \iota_{\mathbf{s}(b)}).$$

Applying the inductive hypothesis, we then have
\begin{align*}a \otimes \iota_{\mathbf{s}(b)} &= \left( (\iota_{v_1}\otimes a_1 \otimes \iota_{u_1}) \circ \ldots \circ (\iota_{v_n} \otimes a_n \otimes \iota_{u_n}) \right) \otimes \iota_{\mathbf{s}(b)}\\
&= \left( (\iota_{v_1}\otimes a_1 \otimes \iota_{u_1}) \circ \ldots \circ (\iota_{v_n} \otimes a_n \otimes \iota_{u_n}) \right) \otimes (\iota_{\mathbf{s}(b)}\circ \ldots \circ \iota_{\mathbf{s}(b)})\\
&=  (\iota_{v_1}\otimes a_1 \otimes \iota_{u_1} \otimes \iota_{\mathbf{s}(b)}) \circ \ldots \circ (\iota_{v_n} \otimes a_n \otimes \iota_{u_n} \otimes \iota_{\mathbf{s}(b)})  \\
&=  (\iota_{v_1}\otimes a_1 \otimes \iota_{u_1 \otimes \mathbf{s}(b)}) \circ \ldots \circ (\iota_{v_n} \otimes a_n \otimes \iota_{u_n \otimes \mathbf{s}(b)}),  
\end{align*}
for some words $a_1, \ldots, a_n$ of rank $1$ and some vertices $u_i, v_i$, where $1\leq i \leq n$. Therefore, $a \otimes \iota_{\mathbf{s}(b)}$ is of the desired form, and a similar argument shows that the same is true for $\iota_{\mathbf{t}(a)} \otimes b$. Hence, their composition has the desired form and thus the claim holds to $w = a \otimes b$. By the principle of mathematical induction, it follows that all words in $A(G)$ can be decomposed into such a form. 
\end{proof}

Lemma~\ref{generalform} has a remarkable consequence when it is applied to the tensor categories $\mathbf{OTa}$ and $\Web$. In particular, it will allow us to think of any tangle (or web) as a composition of tangles (or webs) that each looks like the union of a generating tangle (or web) with a finite number of straight vertical lines. 

\begin{defn}\label{freetensor}
Let $G$ be a small directed multi-graph with the vertex set of $G$ having a monoid structure $(V(G), \otimes, \varnothing)$. Then the \textit{free tensor category over the graph G} is denoted by $FG$ and is defined by the following data:
\begin{itemize}
\item \textbf{Objects:} The objects in $FG$ are the vertices of $G$.
\item \textbf{Morphisms ($X \to Y$):} These are equivalence classes of words in the alphabet of $G$.
\item \textbf{Composition:} This operation is obtained by performing the move (A3) move described in Definition~\ref{alphabet} on any two representatives of the equivalence classes of moves.
\item \textbf{Tensor Product:} This operation is obtained by performing the move (A4) described in Definition~\ref{alphabet} on representatives of the equivalence classes of moves.
\item \textbf{Identity ($X$):} $Id_{X} = \overline{\iota_X}$.
\item \textbf{Tensor Unit:} $\varnothing$.
\end{itemize} 
\end{defn}
The term ``free" in this definition is motivated by the following property of this tensor category.

\begin{thm}\label{FreeProperty}
	Let $FG$ be the free tensor category over a graph $G$ with the properties described in Definition ~\ref{freetensor} and with vertex set $V(G)$ and edge set $E(G)$. For any category $\mathcal D \in V(G)-\mathbf{Cat^\otimes}$ and any function $\phi: E(G) \to \Mor(\mathcal D)$, such that for all $a \in E$,
	$$\bold s(\phi(a)) = \bold s(a) \quad \text{ and } \quad \bold t(\phi(a)) = \bold t(a),$$
	we have that there exists a unique $V(G)-\mathbf{Cat^\otimes}$-morphism $\phi': FG \to \mathcal D$ such that $\phi'(\bar a) = \phi(a)$ for all $a \in E(G).$
\end{thm}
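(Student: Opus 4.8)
The plan is to build $\phi'$ by first defining an auxiliary map $\Phi \colon A(G) \to \Mor(\mathcal D)$ directly on words, \emph{before} passing to equivalence classes, then checking that $\Phi$ respects the equivalence relation of Definition~\ref{EquivWords}, and finally reading off the induced map on $FG$ together with its uniqueness.

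First I would define $\Phi$ by recursion on the rank of a word. On rank-$1$ words I set $\Phi(\iota_v) = Id_v$ and $\Phi(e) = \phi(e)$ for each edge $e$; on a word of the form $b \circ a$ I set $\Phi(b \circ a) = \Phi(b) \circ \Phi(a)$, and on a word of the form $a \otimes b$ I set $\Phi(a \otimes b) = \Phi(a) \otimes_{\mathcal D} \Phi(b)$. This recursion is unambiguous because, as noted in the remark following Definition~\ref{alphabet}, a word of rank $n+1$ is \emph{uniquely} of the form $b \circ a$ or $a \otimes b$ with $a, b$ of strictly smaller rank. A routine induction on rank, using that $\phi$ preserves source and target on edges and that morphisms in $V(G)-\mathbf{Cat^\otimes}$ are the identity on objects, shows $\mathbf{s}(\Phi(w)) = \mathbf{s}(w)$ and $\mathbf{t}(\Phi(w)) = \mathbf{t}(w)$, so every composite written above actually exists in $\mathcal D$.

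The crux is to show that $\Phi$ is constant on equivalence classes, i.e. that $w_1$ equivalent to $w_2$ implies $\Phi(w_1) = \Phi(w_2)$. Since equivalence is generated by single subword replacements, it suffices to prove: if $w_2$ is obtained from $w_1$ by replacing a subword $a_1$ with $a_2$ and $\Phi(a_1) = \Phi(a_2)$, then $\Phi(w_1) = \Phi(w_2)$. Fixing a word decomposition $\{f_k\}_{k=1}^n$ that witnesses the replacement (starting at $a_1$ for $w_1$ and at $a_2$ for $w_2$), I would show by induction on $k$ that $\Phi(f_k \circ \cdots \circ f_1(a_i))$ depends on $a_i$ only through $\Phi(a_i)$: each generator $A(3)^L_b$, $A(3)^R_b$, $A(4)^L_b$, $A(4)^R_b$ acts on $\Phi$-values by post-composition, pre-composition, or left/right tensoring with the fixed morphism $\Phi(b)$, by the very definition of $\Phi$. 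Hence $\Phi(w_1)$ and $\Phi(w_2)$ are obtained from $\Phi(a_1) = \Phi(a_2)$ by the identical sequence of operations and therefore coincide. It then remains to verify $\Phi(a_1) = \Phi(a_2)$ for each of the six admissible pairs $\{a_1, a_2\}$ in Definition~\ref{EquivWords}; these translate, respectively, into associativity of composition, the unit law for composition, the bifunctoriality (interchange) law of $\otimes$, the identity $Id_{v_1} \otimes Id_{v_2} = Id_{v_1 \otimes v_2}$, associativity of $\otimes$, and the tensor-unit law in $\mathcal D$ — all of which hold because $\mathcal D$ is a strict tensor category and $\otimes_{\mathcal D}$ is a bifunctor.

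With well-definedness established, I would set $\phi'(\bar w) := \Phi(w)$ and declare $\phi'$ the identity on objects. By construction $\phi'$ preserves composition, identities, and tensor products and is the identity on objects, so it is a morphism in $V(G)-\mathbf{Cat^\otimes}$, and $\phi'(\bar a) = \Phi(a) = \phi(a)$ for every edge $a$. For uniqueness, suppose $\psi \colon FG \to \mathcal D$ is any such morphism. Being the identity on objects it agrees with $\phi'$ on objects, and on morphisms an induction on rank gives $\psi(\bar w) = \Phi(w)$: the rank-$1$ cases are forced by $\psi(\overline{\iota_v}) = Id_v$ and $\psi(\bar e) = \phi(e)$, while the inductive step is forced because $\psi$ preserves composition and tensor product. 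I expect the main obstacle to be the well-definedness argument of the previous paragraph — specifically, packaging the interaction between word decompositions and the recursion defining $\Phi$ cleanly enough that the six relation checks reduce transparently to the tensor-category axioms of $\mathcal D$.
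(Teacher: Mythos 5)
Your proposal is correct and follows essentially the same route as the paper's proof: define the map on $A(G)$ by recursion on rank, verify invariance under the six generating moves by propagating $\Phi(a_1)=\Phi(a_2)$ through a word decomposition (the paper's $\overline{A(3)^{L/R}_b}$, $\overline{A(4)^{L/R}_b}$ operations are exactly your ``identical sequence of operations'' argument), and prove uniqueness by induction on rank. The only addition you make beyond the paper is the explicit source/target check ensuring the composites in $\mathcal D$ exist, which the paper leaves implicit.
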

\begin{proof}
	We denote the elements of $FG$ by $\bar a$, where $a \in A(G)$ and the bar denotes the equivalence class of words described in Definition~\ref{EquivWords}.
	First we extend $\phi$ to a well defined function $\phi: A(G) \to \D$ inductively. For words of rank 1 we will define
	$$\phi(a) = \begin{cases}
		 \phi(a)& \text{if} \, a \in E(G)\\
		 Id_{\bold s(a)}& \text{if $a$ is an empty path}.\\
	\end{cases}
		$$
	Having defined $\phi$ on words of rank $\leq n$, for any word $c$ of rank $n+1$ then there exists a unique $a, b \in A(G)$ of rank at most $n$ such that $c = a \circ b$ or $c = a \otimes b$. We then define 
	$$\phi( c) = \begin{cases}
		\phi(a)\circ \phi(b)& \text{if} \, c = a \circ b \\
		\phi(a) \otimes \phi(b)& \text{if} \, c = a \otimes b. \\
	\end{cases}$$
Then we define $\phi'(\bar a) = \phi(a)$ for all $\bar a \in \Mor(FG)$.
By the way that $\phi'$ is constructed, we can quickly verify that $\phi'$ is functorial, tensor-product-preserving, and that it acts as the identity on objects. Therefore, all that is needed to show for $f$ to be a morphism of $V(G)-\mathbf{Cat^\otimes}$ is that $\phi'$ is a well defined function.

 By the definition of the equivalence relation in $FG$, it suffices to show that $\phi(w_1) = \phi(w_2)$, for any two words $w_1$ and $w_2$,  where $w_2$ is obtained from $w_1$ by replacing a subword $a_1$ of $w_1$ with another subword $a_2$ using one of the moves (1) through (6) in Definition~\ref{EquivWords}. To see this, note that in all six cases of the relation between $a_1$ and $a_2$, $\phi(a_1)  = \phi(a_2)$ since $D$ is a strict tensor category. For $ b \in \Mor(\D)$, we define the following functions:
\begin{itemize}
	\item[(i)] $\overline{A(3)^L_{b}}: \{c \in \Mor(\D): \mathbf{t}(c) = \mathbf{s}(b)\} \to \Mor(\D)$; $ c \mapsto   b  \circ  c$ \\
	\item[(ii)] $\overline{A(3)^R_b}: \{ c \in \Mor(\D): \mathbf{t}(b) = \mathbf{s}(c)\} \to\Mor(\D)$; $c \mapsto  c \circ  b$ \\
	\item[(iii)] $\overline{A(4)^L_{b}}:\Mor(\D) \to\Mor(\D)$; $ c \mapsto b \otimes c$ \\
	\item[(iv)] $\overline{A(4)^R_{ b}}:  \Mor(\D) \to  \Mor(\D)$; $ c \mapsto  c \otimes  b.$
\end{itemize}
Let $\{f_k\}_{k=1}^n$ be a word decomposition of $w_1$ starting at $a_1$. For each $k \in \{1, \dots, n\}$, there exists $b_k \in A(G)$, such that $$f_k \in \{A(3)_{b_k}^L, A(3)_{b_k}^R, A(4)_{b_k}^L, A(4)_{b_k}^R\}.$$ Define $\bar f_k$ as the corresponding map with $$\bar f_k \in \{\overline{A(3)_{\phi(b_k)}^L}, \overline{A(3)_{\phi(b_k)}^R}, \overline{A(4)_{\phi(b_k)}^L}, \overline{ A(4)_{\phi(b_k)}^R}\}.$$ 
Then by definition of $\phi$ we have that $\phi(f_k(c)) = \bar f_k( \phi(c))$, for all $c \in \Dom( f_k)$. By iterating this, we have 
\begin{align*}
	\phi(w_1)& = \phi(f_n \circ f_{n-1} \circ \cdots \circ f_1(a_1))\\ 
	&= \bar f_n \circ \bar f_{n-1} \circ \cdots \circ \bar f_1(\phi(a_1))\\
	& = \bar f_n \circ \bar f_{n-1} \circ \cdots \circ \bar f_1(\phi(a_2))\\
	& = \phi(f_n \circ f_{n-1} \circ \cdots \circ f_1(a_2)) = \phi(w_2).
\end{align*}
Therefore, $\phi'$ is well defined.


We show uniqueness by using induction on rank once again. Suppose $\phi'$ and $\phi''$ are morphisms of $V(G) -\mathbf{Cat^\otimes}$ that extend $\phi$. By our assumption and the property that functors preserve identity maps, we have that $\phi'$ and $\phi''$ must agree for all  words of rank 1. Additionally, if $\phi'$ and $\phi''$ agree for all words of rank at most n, then if $c$ is a word of rank $n+1$, there exist $a, b \in A(G)$ of rank at most $n$ such that $c = a \circ b$ or $c = a \otimes b$. In either case we have 
$$\phi''(\bar c) = \phi''(\bar a)\circ \phi''(\bar b) = \phi'(\bar a)\circ \phi'(\bar b) = \phi'(\bar c)$$
or 
$$\phi''(\bar c) = \phi''(\bar a)\otimes \phi''(\bar b) = \phi'(\bar a)\otimes \phi'(\bar b) = \phi'(\bar c).$$
Thus, for all words $c$ in $A(G)$, we have that $\phi''(\bar c) = \phi'(\bar c)$ and thus $\phi' = \phi''$.
\end{proof}
\begin{defn} \label{presentation}
	Let $(\Ca, \otimes, \varnothing)$ be a small tensor category and let $M \subseteq Mor(\Ca)$. We can form a directed graph, $G_M$, whose edges are the morphisms in $M$ and whose vertices are the elements of the monoid $(Ob(\Ca), \otimes, \varnothing)$. Let $R$ be a subset of $Mor(FG_M) \times Mor(FG_M)$, such that if $(f, g) \in R$ then $f$ and $g$ have the same source and target. Then we say that $\langle M : R\rangle$ is a \textit{presentation of $\Ca$ with generators $M$ and relations $R$} if and only if there exists an isomorphism 
	$$\phi: FG_M/\langle R\rangle \to  \Ca$$
	in the category $Ob(\Ca) - \mathbf{Cat}^\otimes$, such that $\phi(f) = f$ for all $f \in M$.
\end{defn}

\begin{thm}\label{genrel}
	Suppose that $(\Ca, \otimes, \varnothing)$ is a small tensor category presented by $\langle M : R \rangle$. Then for any category $\mathcal D  \in Ob(\mathcal C)-\mathbf{Cat}^\otimes$ and any function $\phi: M \to Mor(\mathcal D)$ such that $(f, g) \in R$ implies that $\phi(f) = \phi(g)$, we have that there exists a unique morphism $\phi'$ in $Ob(\mathcal C)-\mathbf{Cat}^\otimes$ such that $\phi'(f) = \phi(f)$, for all $f \in M$.
\end{thm}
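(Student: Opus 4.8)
The plan is to combine the universal property of the free tensor category (Theorem~\ref{FreeProperty}) with the kernel/quotient machinery from Example~\ref{ker} and the remark producing the generated congruence $\langle R\rangle$. A first point of bookkeeping: in the hypothesis the symbol $\phi$ is overloaded, since $\phi$ is given only on $M=E(G_M)$ but the clause ``$(f,g)\in R$ implies $\phi(f)=\phi(g)$'' evaluates $\phi$ on morphisms of $FG_M$. I read this as referring to the free extension of $\phi$, which I will denote $\tilde\phi$. I also note at the outset that, since the desired $\phi'$ is to be the identity on objects and functorial, the requirement $\phi'(f)=\phi(f)$ forces $\mathbf{s}(\phi(f))=\mathbf{s}(f)$ and $\mathbf{t}(\phi(f))=\mathbf{t}(f)$; I therefore take this source/target compatibility as part of the hypothesis on $\phi$, which is exactly what is needed to invoke Theorem~\ref{FreeProperty}.

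First I would apply Theorem~\ref{FreeProperty} to the graph $G_M$ (edge set $M$, vertex set $\Ob(\Ca)$) and the function $\phi\colon M\to\Mor(\mathcal D)$, obtaining a unique $\Ob(\Ca)-\mathbf{Cat}^\otimes$-morphism $\tilde\phi\colon FG_M\to\mathcal D$ with $\tilde\phi(\bar a)=\phi(a)$ for every edge $a\in M$. By Example~\ref{ker}, $\ker(\tilde\phi)$ is a tensor congruence on $FG_M$. The hypothesis says precisely that $R\subseteq\ker(\tilde\phi)$, and since $\langle R\rangle$ is by definition the smallest tensor congruence containing $R$, this yields $\langle R\rangle\subseteq\ker(\tilde\phi)$. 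This containment is the crux of the argument: it is the analogue of the standard fact that a homomorphism vanishing on a generating set of relations vanishes on the whole congruence they generate, and it is what allows the assignment to descend through the quotient.

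Next I would use $\langle R\rangle\subseteq\ker(\tilde\phi)$ to define $\hat\phi\colon FG_M/\langle R\rangle\to\mathcal D$ by $\hat\phi(\overline w)=\tilde\phi(w)$. This is well defined because $\overline{w_1}=\overline{w_2}$ in the quotient means $(w_1,w_2)\in\langle R\rangle\subseteq\ker(\tilde\phi)$, whence $\tilde\phi(w_1)=\tilde\phi(w_2)$; and $\hat\phi$ is again an $\Ob(\Ca)-\mathbf{Cat}^\otimes$-morphism, since composition, tensor product, identities, and the unit in the quotient are all defined representative-wise and $\tilde\phi$ preserves them. The presentation now supplies an isomorphism $\psi\colon FG_M/\langle R\rangle\to\Ca$ in $\Ob(\Ca)-\mathbf{Cat}^\otimes$ with $\psi(\bar f)=f$ for $f\in M$ (Definition~\ref{presentation}), and its inverse $\psi^{-1}$ is again a morphism. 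Setting $\phi':=\hat\phi\circ\psi^{-1}\colon\Ca\to\mathcal D$ produces the desired morphism, and for $f\in M$ I compute $\phi'(f)=\hat\phi(\psi^{-1}(f))=\hat\phi(\bar f)=\tilde\phi(\bar f)=\phi(f)$.

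For uniqueness I would reduce to the uniqueness already established for the free category. Let $q\colon FG_M\to FG_M/\langle R\rangle$ be the quotient morphism $w\mapsto\overline w$, which is surjective on morphisms, and recall $\psi$ is bijective on morphisms; hence $\psi\circ q$ is surjective on morphisms. If $\phi'_1,\phi'_2\colon\Ca\to\mathcal D$ are two $\Ob(\Ca)-\mathbf{Cat}^\otimes$-morphisms with $\phi'_i(f)=\phi(f)$ for all $f\in M$, then $\phi'_1\circ\psi\circ q$ and $\phi'_2\circ\psi\circ q$ are morphisms $FG_M\to\mathcal D$ that both restrict to $\phi$ on the edges $M$, so the uniqueness clause of Theorem~\ref{FreeProperty} forces them to coincide; surjectivity of $\psi\circ q$ on morphisms then gives $\phi'_1=\phi'_2$. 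I expect the main obstacle to be purely organizational rather than mathematical: keeping the several meanings of $\phi$ straight (the generator data, its free extension $\tilde\phi$, the descended map $\hat\phi$, and the final $\phi'$ on $\Ca$) and making the descent-through-the-quotient step fully rigorous. Once $\langle R\rangle\subseteq\ker(\tilde\phi)$ is in hand, the remainder is a routine assembly of Theorem~\ref{FreeProperty}, Example~\ref{ker}, and the universal property of the quotient.
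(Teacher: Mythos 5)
Your proposal is correct and follows essentially the same route as the paper's proof: extend $\phi$ to the free category via Theorem~\ref{FreeProperty}, observe that $\langle R\rangle\subseteq\ker$ of that extension since the kernel is a tensor congruence containing $R$, descend to $FG_M/\langle R\rangle$, compose with the presentation isomorphism, and obtain uniqueness by precomposing with the surjection from $FG_M$ and invoking the uniqueness clause of Theorem~\ref{FreeProperty}. The only cosmetic difference is that you descend $\tilde\phi$ directly to the quotient by $\langle R\rangle$, whereas the paper factors through $FG_M/\ker(\psi)$ using Theorem~\ref{fit}; your remarks on the overloading of $\phi$ and the needed source/target compatibility are sound points that the paper leaves implicit.
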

\begin{proof}
	By Theorem~\ref{FreeProperty}, we have that there exists a $\psi$ in $Ob(\mathcal C)-\mathbf{Cat}^\otimes$, such that $\psi: FG_M \to \mathcal D$ and $\psi(f) = \phi(f)$, for all $f \in M$. Now let $(f, g) \in R$. This implies that $\psi(f)= \phi(f) = \phi(g)  = \psi(g),$ so we have $(f, g) \in ker(\psi)$. Thus, since $ker(\psi)$ is a tensor congruence that contains $R$, by the definition of the tensor congruence generated by $R$, we have that $\langle R \rangle \subset  ker(\psi).$ To this end, we consider the map 
	$$\pi: FG_M/\langle R\rangle \to FG_M/ker(\psi),$$
	where for each morphism $f \in FG_M$, $\pi$ sends the class of $f$ in $FG_M/\langle R\rangle$ to the class of $f$ in $FG_M/ker(\psi)$ (this is a well defined map since $\langle R \rangle \subset  ker(\psi)$). Now, since $\mathcal C$ is presented by $\langle M:R \rangle$,  we know that there exists a morphism in $Ob(\mathcal C)-\mathbf{Cat}^\otimes$,
	$$\alpha: \mathcal C \to FG_M/\langle R\rangle,$$
	such that $\alpha$ sends every $f \in M$ to the class of $f$ in $FG_M/\langle R\rangle$. Finally, by Theorem ~\ref{fit}, there exists a morphism 
	$$\overline{\psi}: FG_M/ker(\psi) \to \mathcal D.$$
	Moreover, the proof of this theorem lets us choose this morphism $\overline{\psi}$ such that for all $f \in FG_M$, $\overline{\psi}([f]) =\psi(f).$
	
	Choose $\phi' = \overline{\psi}\circ \pi \circ\alpha.$ Then indeed $\phi'$ is a morphism of $Ob(\mathcal C)-\mathbf{Cat}^\otimes$ and for all $f \in M$, we have 
	$$\phi'(f) = \overline{\psi}\circ \pi \circ\alpha(f) = \overline{\psi}([f]) = \psi(f) = \phi(f).$$
	
	Suppose that there existed two extensions of $\phi$ to $\mathcal{C}$, and call them $\phi'$ and $\phi''$. Then let $\alpha$  be the isomorphism $\mathcal C \to FG_M/\langle R\rangle$ that sends each $f \in M$ to the class of $f$ in $FG_M/\langle R\rangle$, which we will denote $\overline{f}$. Additionally let $\pi$ be the morphism in $Ob(\mathcal C)-\mathbf{Cat}^\otimes$ given by
	$$ FG_M \to  FG_M/\langle R\rangle, \,\,\, \overline{f} \mapsto [f],$$
	where $[f]$ is the class of $f$ in  $FG_M/\langle R\rangle$.
	Then we have that 
	$\phi'' \circ \alpha^{-1} \circ \pi$ and $\phi' \circ \alpha^{-1} \circ \pi$ are morphisms on the free category $FG_M$ that agree on the elements of $M$. By Theorem~\ref{FreeProperty}, we can conclude that
		$$\phi'' \circ \alpha^{-1} \circ \pi  = \phi' \circ \alpha^{-1} \circ \pi.$$ 
	Therefore, for any morphism $[g] \in  FG_M/\langle R\rangle$, $\pi(\overline{g}) = [g]$, and thus 
	$$\phi''\circ \alpha^{-1}([g]) = \phi''\circ \alpha^{-1}\circ \pi(\overline{g})  = \phi'\circ \alpha^{-1}\circ \pi(\overline{g})  =  \phi'\circ \alpha^{-1}([g]). $$
	Hence, $\phi''\circ \alpha^{-1} = \phi'\circ \alpha^{-1}$ and consequently $\phi' = \phi''.$
\end{proof}

\section{The category of oriented webs} \label{webdefine}

In this section we introduce the category of $sl(3)$ webs, which are the objects we are primarily working with in this paper.
\begin{defn} \label{webs}
	Let $\epsilon = (\epsilon_1,\ldots , \epsilon_m)$ and $\nu = (\nu_1, \ldots, \nu_n)$ be finite strings of the symbols $+$ and $-$ (we also allow the empty string).  We say an \textit{$(\epsilon, \nu)$-web} is an oriented planar multigraph $W$, possibly with finite number of verticeless loops, in the strip $S:= \R \times [0,1]$. We denote the set of vertices of $W$ by $V(W)$ and we require that a $(v_1, v_2)$-edge is a $C^1$ orientation-preserving embedding $\phi : [0,1] \to S$ such that $\phi(0) = v_1$ and $\phi(1) = v_2$. We impose that the following hold:
	\begin{enumerate}
		
		\item[(i)] For all vertices $v \in S^{\circ}$, $\text{deg}(v) = 3$ and $v$ is either a sink or a source. In such a case, we say $v$ is an \textit{interior vertex} of $W$. 
		\item[(ii)] $ V(W)\cap (\R \times \{0\}) = \{(i,0) : i \in \{1, 2, \ldots, m\}\}$ and all such vertices have degree 1
		such that $(i,0)$ is a source if $\epsilon_i = +$ and a sink if $\epsilon_i = -$.
		\item[(iii)] $ V(W)\cap (\R \times \{1\}) = \{(j,1) : j \in \{1, 2, \ldots, n\}\}$ and all such vertices have degree 1
		such that $(j,1)$ is a source if $\nu_j = -$ and a sink if $\nu_j = +$.
		\item[(iv) ]Edges of $W$ can only intersect the boundary of the strip at its manifold boundary. Moreover, at these intersection points there must exist a neighborhood around the boundary point in which the edge coincides with a line perpendicular to $\R \times \{0\}$.
	\end{enumerate}
\end{defn} 
In Fig.~\ref{webex}, we show an example of a web with source $\epsilon = (+, +, -, -, +)$ and target $\nu = (-, +, +)$. We note that an interior vertex is a \textit{sink} or a \textit{source} if the edges meeting at the vertex are oriented towards the vertex or, respectively, away from the vertex.

\begin{figure}[h]
	\includegraphics[scale=0.25]{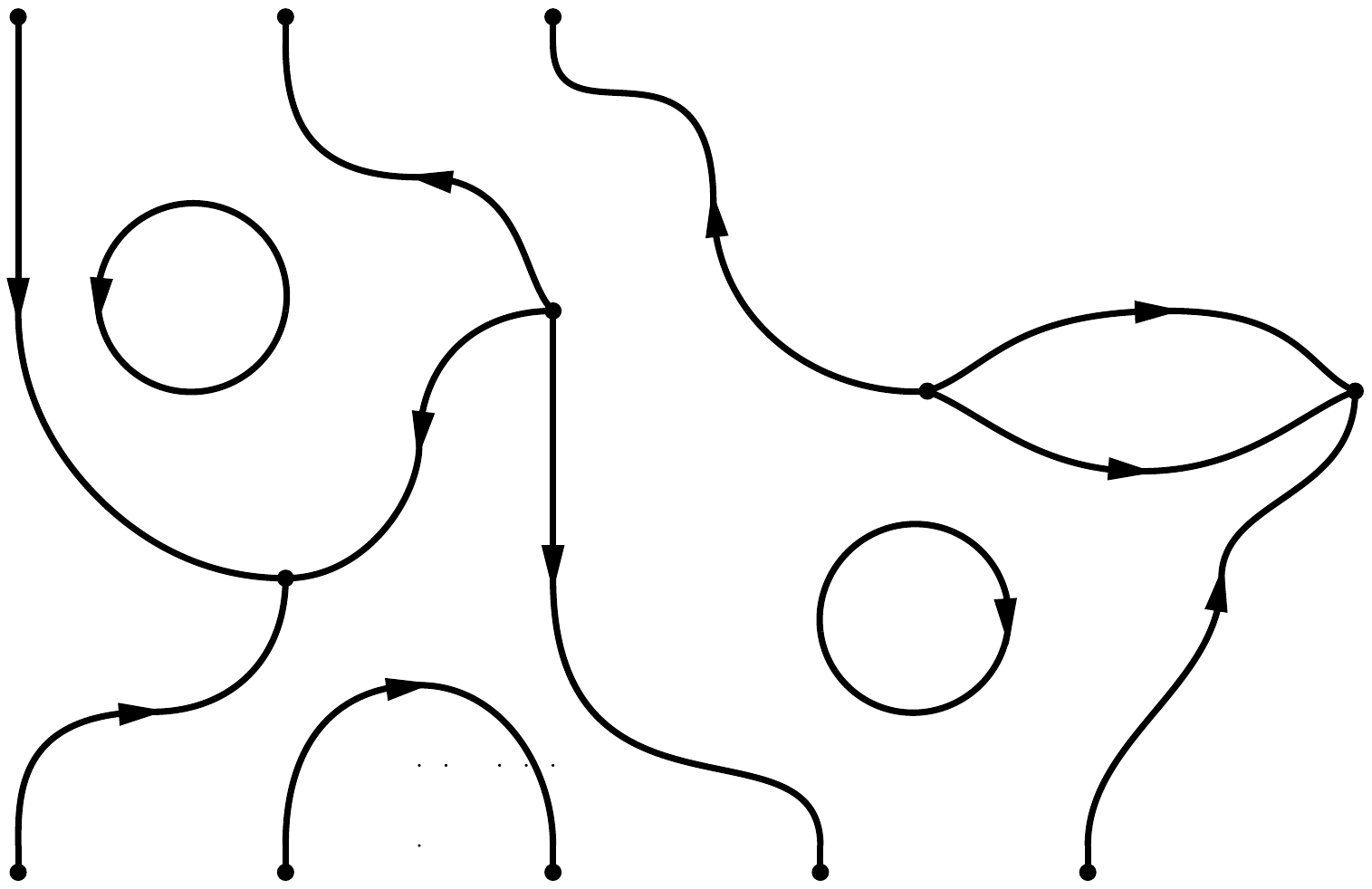}
	\put(-185,125){\fontsize{10}{10}$-$}
	\put(-148,125){\fontsize{10}{10}$+$}
	\put(-113,125){\fontsize{10}{10}$+$}
	\put(-185,-10){\fontsize{10}{10}$+$}
	\put(-148,-10){\fontsize{10}{10}$+$}
	\put(-113,-10){\fontsize{10}{10}$-$}
	\put(-77,-10){\fontsize{10}{10}$-$}
	\put(-40,-10){\fontsize{10}{10}$+$}
	\caption{A $((+,+,-,-, +), (-,+,+))$-web}
	\label{webex}
\end{figure}

By taking the union over all vertices and the images of edges, we can view every web as a subset of the strip $S$.

Given any two webs $W_1$ and $W_2$, we say that they are \textit{isotopic} if and only if there is an isotopy of $S$ relative to $\partial S$ that sends $W_1$ to $W_2$. That is, there exists a continuous map

$$H: S\times [0,1] \to S,$$
such that each $H(\cdot, t)$ is a homeomorphism on $S$ that gives the identity when restricted to the boundary of $S$, $H(\cdot, 0) = Id_S$, and $H(W_1, 1) = W_2$. With some work, one can check that isotopy is an equivalence relation, allowing us to identify every web $W$ with its isotopy class $[W]$. 

These webs then naturally form a category, with composition being concatenation. The definition of this category is as follows.

\begin{defn} \label{websCat}
	The \textit{category of webs}, denoted here by $\mathbf{Web}$, is a small category consisting of the following data: 
	
	\begin{itemize}
		\item \textbf{Objects:} Finite and possibly empty strings of the symbols $+$ and $-$.
		\item \textbf{Morphisms ($\epsilon \to \nu$):} Isotopy classes of $(\epsilon, \nu)$-webs.
		\item \textbf{Composition:} Given morphisms $[W]:  \epsilon \to \nu$ and $[V]: \nu  \to \mu$, $[V] \circ [W]$ is given by first taking any web from each equivalence class, say $W$ and $V$, respectively. Then, we construct an $(\epsilon, \mu)$-web, $V \circ W$, by geometrically stacking $V$ on top of $W$, deleting the vertices in $\R \times \{1\}$, and linearly scaling the $y$ axis in half so that $V \circ W$ lies in $S$. Then we define $[V] \circ [W]$ as the isotopy class of $V \circ W$. That is, $[V \circ W] = [V] \circ [W]$.
		\item \textbf{Identity ($\epsilon$):} This is the isotopy class of the $(\epsilon, \epsilon)$-web with only edges between points of the form $(i, 0)$ and $(i,1)$, for $i \in \mathbb{N}$, that are straight lines with the appropriate orientation. 
	\end{itemize}
	\end{defn}

We can further enrich this category by defining a bifunctor $\otimes: \mathbf{Web} \times \mathbf{Web} \to \mathbf{Web}$. On objects we define 
\[ 
(\epsilon_1, \ldots, \epsilon_n) \otimes (\nu_1, \ldots, \nu_m) = (\epsilon_1, \ldots, \epsilon_n,\nu_1, \ldots, \nu_m).
\]

Given webs $W$ and $V$, the tensor product $[W] \otimes [V]$ is the isotopy class of the web obtained by placing an isotopic copy of $V$ to the right of $W$ so that the two are disjoint and the resulting web satisfies properties (ii) - (iv) in Definition~\ref{webs} (see Fig.~\ref{Tangle Tensor}). This tensor product makes the triple $(\mathbf{Web}, \otimes, \varnothing)$ a strict tensor category.

\begin{figure}[h]
	\includegraphics[scale=0.4]{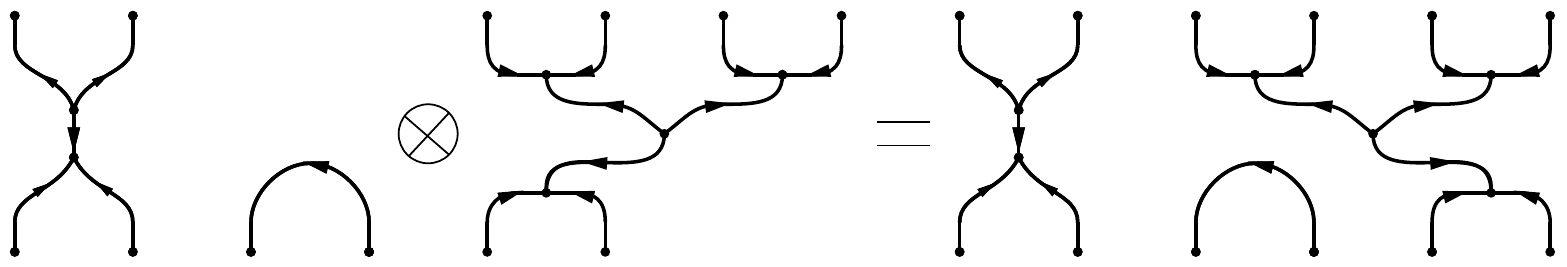}
	\caption{A tensor product of webs}
	\label{Tangle Tensor}
\end{figure}

\section{A Presentation for the category $\mathbf{Web}$} \label{webpres}

In this section we provide a presentation for the category $\mathbf{Web}$ using generators and relations, by means of Definition~\ref{presentation}.
For this purpose, we start by giving names to some special isotopy classes of webs and their represented diagrams shown in  Fig.~\ref{SpecialWebs}. 

\begin{figure}[ht]
	\includegraphics[scale=0.7]{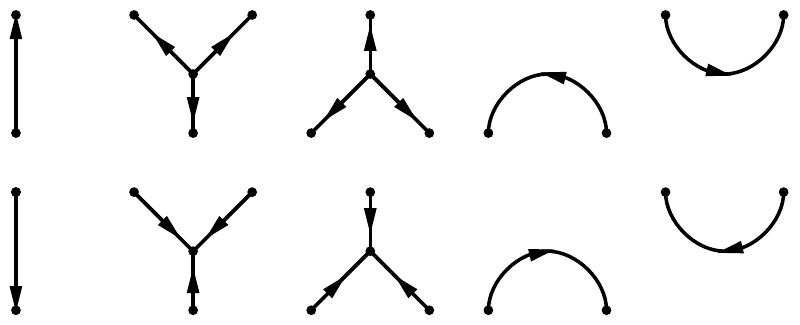}
		 \put(-270,115){\fontsize{10}{10}$I_+$}
		 \put(-210,115){\fontsize{10}{10}$Y_+$}
		 \put(-150,115){\fontsize{10}{10}$\lambda_+$}
		 \put(-90,115){\fontsize{10}{10}$n_+$}
		 \put(-30,115){\fontsize{10}{10}$u_+$}
		  \put(-270,-10){\fontsize{10}{10}$I_-$}
		 \put(-210, -10){\fontsize{10}{10}$Y_-$}
		 \put(-150,-10){\fontsize{10}{10}$\lambda_-$}
		 \put(-90,-10){\fontsize{10}{10}$n_-$}
		 \put(-30,-10){\fontsize{10}{10}$u_-$}
	\caption{In the order listed above, we will refer to these webs as: $I_+, Y_+, \lambda_+, n_+, u_+, I_-, Y_-, \lambda_-, n_-, u_-$.}
	\label{SpecialWebs}
\end{figure}

 It is not hard to see that the following relations hold:
 \begin{align}
 \text{For } \epsilon = (\epsilon_1, \ldots, \epsilon_n); \: \bigotimes_{i=1}^{n}I_{\epsilon_i} &= Id_\epsilon, \text{where} \,\, \epsilon_i \in \{ + ,- \}, \label{RI}\\ 
 (n_\mp \otimes I_\pm) \circ (I_{\pm} \otimes u_\pm) &= I_\pm =   (I_\pm \otimes n_\pm)\circ( u_{\mp} \otimes I_{\pm}), \label{R1}\\ 
 (n_\pm \otimes I_\pm) \circ(I_\mp \otimes Y_\pm) &= \lambda_\pm =  (I_\pm \otimes  n_\mp)\circ(Y_\pm \otimes I_\mp).  \label{R2}
 \end{align}
 The first of the above equalities comes from the definition of the identity element in $\Web$. The second and third double equalities hold due to the planar isotopies depicted in Fig.~\ref{relations2-3}.

  These planar isotopies of webs hold for all possible orientations. In fact,  these relations are all that is needed to describe the category of webs
 \begin{figure}[h]
		 $$ \raisebox{-15pt}{\includegraphics[height=1.5cm]{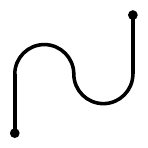}} \mathlarger{\mathlarger{\leftrightarrow}} \raisebox{-15pt}{\includegraphics[height=1.5cm]{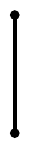}}\mathlarger{\mathlarger{\leftrightarrow}} \raisebox{-15pt}{\includegraphics[height=1.5cm]{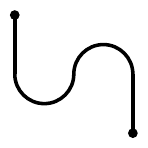}}
		 \hspace{0.5cm} \text{;} \hspace{0.5cm}
		 \raisebox{-15pt}{\includegraphics[height=1.5cm]{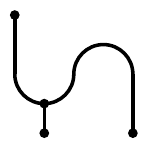}} \mathlarger{\mathlarger{\leftrightarrow}} \raisebox{-15pt}{\includegraphics[height=1.5cm]{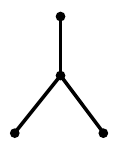}}\mathlarger{\mathlarger{\leftrightarrow}} \raisebox{-15pt}{\includegraphics[height=1.5cm]{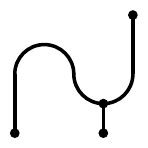}}$$
         \caption{Planar isotopies of webs}
	\label{relations2-3}
\end{figure}

To formalize this claim, we will utilize the definition of a tensor category presentation given in Definition~\ref{presentation}.

\begin{thm} The morphisms, $ Y_+, n_+, u_+, Y_-, n_-, u_-$ in $\Web$, depicted in Fig.~\ref{SpecialWebs}, together with the relations 

\begin{itemize}
\item[R1a:] $(n_\mp \otimes I_\pm) \circ (I_{\pm} \otimes u_\pm) \sim I_\pm$
\item[R1b:] $I_\pm \sim (I_\pm \otimes n_\pm)\circ( u_{\mp} \otimes I_{\pm})$
\item[R2:] $(I_\pm \otimes  n_\mp)\circ(Y_\pm \otimes I_\mp) \sim (n_\pm \otimes I_\pm) \circ(I_\mp \otimes Y_\pm) $
\end{itemize}
form a presentation of the category $\Web$. That is to say that
$$\langle  Y_+, Y_-, n_+, n_{-}, u_{+}, u_{-} : R1a, R1b, R2\rangle \,\,\, \text{presents} \,\,\, \Web.$$
\end{thm}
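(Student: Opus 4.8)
The plan is to produce directly the isomorphism required by Definition~\ref{presentation}. Set $M = \{Y_+, Y_-, n_+, n_-, u_+, u_-\}$ and form the graph $G_M$, whose vertices are the objects of $\Web$ and whose edges are the morphisms of $M$, each carrying the source and target read from Fig.~\ref{SpecialWebs}. Feeding the assignment $a \mapsto a$ on edges into the universal property of Theorem~\ref{FreeProperty} gives a unique morphism $\psi \colon FG_M \to \Web$ in $\Ob(\Web)-\mathbf{Cat^\otimes}$ with $\psi(\bar a) = a$ for each $a \in M$ and $\psi(\iota_{(\pm)}) = I_\pm$. Because R1a, R1b, R2 are genuine equalities of isotopy classes in $\Web$ — they are exactly the planar isotopies recorded in \eqref{R1} and \eqref{R2} — we have $R \subseteq \ker(\psi)$, and since $\ker(\psi)$ is a tensor congruence by Example~\ref{ker}, minimality gives $\langle R\rangle \subseteq \ker(\psi)$. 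By Theorem~\ref{fit}, $\psi$ descends to an isomorphism $FG_M/\ker(\psi) \cong \psi(FG_M)$ sending $\bar a \mapsto a$, so the whole statement reduces to the two claims $\psi(FG_M) = \Web$ (generation) and $\ker(\psi) = \langle R\rangle$ (no further relations).

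For generation I would argue geometrically. Given an $(\epsilon,\nu)$-web $W$, isotope it so that the height function $y$ is Morse on $W$ and so that its distinguished features — the trivalent sources and sinks, and the local maxima and minima of its edges — occur at pairwise distinct heights, every other height meeting $W$ transversally in a finite union of vertical strands. Slicing $S$ between consecutive features writes $[W]$ as a composition of layers, each containing a single feature flanked by vertical strands, so that each layer is the image under $\psi$ of a word $\iota_{v}\otimes g \otimes \iota_{u}$ with $g \in \{Y_\pm, n_\pm, u_\pm\}$; this is precisely the normal form furnished by Lemma~\ref{generalform}. Thus $[W]$ lies in the image of $\psi$, and $\psi$ is surjective on morphisms.

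The genuine obstacle is the reverse inclusion $\ker(\psi) \subseteq \langle R\rangle$: whenever two words are sent by $\psi$ to isotopic webs, they must already be identified by $\langle R\rangle$. Here I would take $(w_1, w_2) \in \ker(\psi)$, put both into the slice normal form of Lemma~\ref{generalform}, and regard $\psi(w_1), \psi(w_2)$ as two Morse presentations of one isotopy class of web. The heart of the matter is a movie-move theorem for webs: any two such presentations of isotopic webs are joined by a finite chain of local moves, namely (a) exchanging the heights of two features supported in disjoint vertical strips, (b) introducing or cancelling an adjacent minimum--maximum pair on a single strand, and (c) sliding a trivalent vertex past an adjacent cap or cup. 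This is the trivalent-graph analogue of Reidemeister's theorem and of the calculus used for $\mathbf{OTa}$ in~\cite{Tur}, and establishing it is the technical core of the argument.

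Granting that theorem, the translation into the quotient is routine and is where the hypotheses do their work. A type-(a) move is already an equality of words in $FG_M$ via the interchange law, relation (3) of Definition~\ref{EquivWords}, so it is free in the quotient; a type-(b) move, after the untouched strands are absorbed into identity factors $\iota_v \otimes (-) \otimes \iota_u$ as in Lemma~\ref{generalform}, is exactly R1a or R1b; and a type-(c) move, similarly flanked, is exactly R2. Since $\langle R\rangle$ is closed under composing and tensoring with identities by parts (ii) and (iii) of Definition~\ref{RelCat}, each move keeps us inside a single $\langle R\rangle$-class, so $w_1$ and $w_2$ are $\langle R\rangle$-equivalent and $(w_1,w_2)\in\langle R\rangle$. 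Combining this with generation gives $\ker(\psi)=\langle R\rangle$ and $\psi(FG_M)=\Web$, whence Theorem~\ref{fit} yields an isomorphism $FG_M/\langle R\rangle \cong \Web$ fixing $M$, which is the asserted presentation.
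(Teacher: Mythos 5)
Your proposal follows essentially the same route as the paper: build the canonical functor $FG_M \to \Web$, prove surjectivity by slicing a general-position representative into single-feature layers, and prove $\ker = \langle R\rangle$ by reducing an isotopy between two Morse presentations to a finite chain of local moves that translate into R1a, R1b, R2 and the interchange law. The reduction to local moves that you flag as the technical core is exactly what the paper also outsources (to Carter's Reidemeister/Roseman-type moves for the height function), so deferring it is consistent with the paper's own level of detail.

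One concrete slip in your generation step: a slice containing a trivalent vertex need not be of the form $\iota_v \otimes g \otimes \iota_u$ with $g \in \{Y_\pm, n_\pm, u_\pm\}$, because the vertex may sit with two legs below and one above, i.e.\ the layer is $\lambda_\pm$, which is not a generator. The paper handles this explicitly by using the equality $\lambda_\pm = (I_\pm \otimes n_\mp)\circ(Y_\pm \otimes I_\mp)$ (relation R2 read in $\Web$) to rewrite such layers in terms of the generators; your argument needs the same remark, after which surjectivity goes through as you describe.
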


\begin{proof}

Consider the abstract directed graph $G$ whose vertices are the objects of $\Web$ and whose edges are the isotopy classes of the set $K:= \{Y_+, Y_-, n_+, n_-, u_+, u_-\}$. We must show that the quotient category $FG/\langle R1a, R1b, R2 \rangle$, where $FG$ is the free tensor category generated by $G$, is isomorphic to $\Web$ in the category $Ob(\Web) - \mathbf{Cat}^\otimes$. Thus, by Theorem~\ref{fit} and Definition~\ref{presentation}, it suffices to construct a surjective tensor-product-preserving functor $\phi: FG \to \Web$, such that its kernel (see Example~\ref{ker}) is the tensor congruence $\langle R1a, R1b, R2 \rangle.$ 

There is an obvious tensor product preserving functor, $\phi: FG \to \Web$, that is the identity on objects, takes each one of these special webs to itself in the category $\Web$, and takes the identities on $+$ and $-$ to $I_+$ and $I_-$, respectively. 


To prove that the image of this functor is the category $\Web$, consider an arbitrary morphism $[W]$ in $\Web$. Given the projection function $\pi: \R \times [0,1] \to [0,1]$, we can define the following singularities:


\begin{itemize}
\item We say that $x \in W$ is a \textit{type-I singularity} if and only if it is a vertex of $W$.
\item We say that $x \in W$ is a \textit{type-II singularity} if and only if it is a local minimum or maximum of the height function $\pi$.
\end{itemize}

We can see that by definition, $W$ has a finite number of type-I singularities, but not necessarily finitely many type-II singularities. However, we may construct an isotopic copy of $W$, call it $W'$, with the following properties.

\begin{itemize}
\item[(1)] $\pi^{-1}\{x\} \cap W'$ is finite for all $x \in [0,1]$;
\item[(2)] $W'$ has finitely many type-II singularities
\item[(3)] No point of $W'$ is both a type-I and type-II singularity;
\item[(4)]  For all $x \in [0,1]$, the line $\R \times \{x\}$ is never tangent to the web $W'$, except when the intersection point is a type-II singularity;
\item[(5)] For all $x \in [0,1]$, $\pi^{-1}\{x\} \cap W'$ contains no more than one singularity.
\end{itemize}

If these conditions hold for some web $W'$, we say that $W'$ is in \textit{general position}. If the existence of such an isotopic copy $W'$ is not clear, one can always first take $W$ via an ambient isotopy to a web whose edges are piecewise-linear (see \cite{Crom}, Theorem 1.11.6), then by perturbing the vertices (possibly including the joints on each edge) we may take this piecewise-linear web to a piecewise-linear web $W''$ that is in general position. Then we can use an ambient isotopy that takes a neighborhood of $W''$ around each joint of each edge to some arc of a circle in such a way that the resulting web $W'$ has $C^1$ edges (see \cite{Crom}, Theorem 1.11.7). Since all this isotopy did was smooth out the joints with circle arc, $W'$ will still satisfy conditions $(1)-(4)$ of a general position webs. With one more isotopy locally perturbing the singularities of $W'$ we get a web in general position.

Denote the number of singularities on $W'$ with the natural number $n$. Given such a web $W'$ in general position, we can consider $n+1$ horizontal parallel lines slicing $W'$, so that between any two consecutive lines there is exactly one singularity and such that the first and last lines, from bottom to top, are $\R \times \{0\}$ and $\R \times \{1\}$, respectively. 
This will result in $n$ webs  $W'_1, \ldots, W'_n$, where $W'_i$ is the section between the $i$th and $(i+1)$th lines and $W'_n \circ \ldots \circ W'_1 = W'$, and where each $W'_i$ ($1 \leq i 
\leq n$) satisfies the definition of a web.


 Each of these webs $W'_i$ is then the union of a finite number of disjoint connected components, where only one of these components contains a singularity. The components containing no singularities are isotopic to $I_{\pm}$ and the  components with a singularity, are isotopic to either $\lambda_{\pm}$ or $Y_{\pm}$ if they contain a singularity of type-I or to $n_\pm$ or $u_\pm$ if they contain a singularity of type-II. 
Now, we can use relation $\eqref{R2}$ to rewrite each $\lambda_+$ and $\lambda_-$ in terms of $I_+, I_-, Y_+, n_+, u_+, Y_-, n_-$, and $u_-$. Indeed, $\lambda_+ = (I_+ \otimes  n_-)\circ(Y_+ \otimes I_-)$ and $\lambda_- = (I_- \otimes  n_+)\circ(Y_- \otimes I_+)$, or 
$\lambda_+ = (n_+ \otimes  I_+)\circ(I_- \otimes Y_+)$ and $\lambda_- = (n_- \otimes  I_-)\circ(I_+ \otimes Y_-)$, as shown in Fig.~\ref{relations2-3}.

Then, we have that each $W'_i$ is isotopic to a web of the form $I_{u_i}\otimes  \phi(X_i) \otimes I_{v_i}$, for some objects $u_i$ and $v_i$ and some $X_i \in K \cup \{\lambda_+, \lambda_- \}$. 

Therefore, we have the equalities of the following isotopy classes: 
\begin{align*}
[W] = [W'] &= (I_{u_n}\otimes \phi(X_n) \otimes I_{v_n})\circ \ldots \circ (I_{u_1}\otimes \phi( X_1) \otimes I_{v_1})
\\ &=\phi((\iota_{u_{n}}\otimes  X_n  \otimes \iota_{v_n})\circ \ldots \circ (\iota_{u_1}\otimes  X_1  \otimes \iota_{v_1})),
\end{align*}
where each $X_i (1 \leq i \leq n)$ is a word in the alphabet of $G$. It follows that the image of the functor $\phi$ is $\Web$. We note that given any web in general position $W$, this construction of a word by decomposing the web in horizontal strips is unique; we will denote this word $d(W)$.

Now we aim to show that the kernel of this functor is the tensor congruence generated by the relations, $R1a$, $R1b$ and $R2$. Since relations $\eqref{R1}$ and $\eqref{R2}$ hold in $\Web$, we must have $\langle R1a, R1b, R2 \rangle \subseteq ker(\phi)$. To this end, let $(a, b) \in ker(\phi)$. Thus $\phi(a) = \phi(b)$. By Lemma~\ref{generalform}, we may then expand $a$ and $b$ as follows
\[a = (\iota_{v_n}\otimes a_n \otimes \iota_{u_n})\circ \ldots \circ (\iota_{v_1}\otimes a_1 \otimes \iota_{u_1})\]
and
\[b =(\iota_{v'_m}\otimes b_m \otimes \iota_{u'_m})\circ \ldots \circ (\iota_{v'_1}\otimes b_1 \otimes \iota_{u'_1}),\]
where each $u_i,v_i, u'_i,v'_i \in Ob(\Web)$ and each $a_i$ and $b_i$ are either identities or elements of $K$. Reducing these words by removing all of the unnecessary identity morphisms will either yield an equivalent word of the form above, where each $a_i, b_i \in K$, or the entire word will be an identity morphism. Therefore, we may assume that each $a_i$ and $b_i$ above contains a singularity. Then 
\[\phi(a)= (I_{v_n}\otimes \phi(a_n) \otimes I_{u_n})\circ \ldots \circ (I_{v_1}\otimes \phi(a_1) \otimes I_{u_1})\]
and 
\[\phi(b) = (I_{v'_m}\otimes \phi(b_m) \otimes I_{u'_m})\circ \ldots \circ (I_{v'_1}\otimes \phi( b_1) \otimes I_{u'_1}),\]
where the right hand sides of the equations above are representatives in general position of $\phi(a)$ and $\phi(b)$. That is, there are webs $W_a$ and $W_b$ in general position such that $d(W_a) = \phi(a)$ and $d(W_b) = \phi(b)$, respectively.

Let $H: S\times [0,1] \to S$ be an isotopy of $S = \mathbb{R} \times [0,1]$ that takes $W_a$ to $W_b$. Then,  $H(W_a, t)$ is in general position for all $t \in [0,1]$ if and only if the isotopy never creates or removes singularities and it never changes the height order of any two singularities.

Thus, under the assumption that $H(W_a, t)$ is in general position for all $t \in [0,1]$, then we can see that the word $d(W_a, t)$ must be the same for all $t \in [0,1]$. Hence the words

\[d(W_a, 0) = (\iota_{v_n}\otimes a_n \otimes \iota_{u_n})\circ \ldots \circ (\iota_{v_1}\otimes a_1 \otimes \iota_{u_1})\]
and
\[d(W_a, 1)=(\iota_{v'_m}\otimes b_m \otimes \iota_{u'_m})\circ \ldots \circ (\iota_{v'_1}\otimes b_1 \otimes \iota_{u'_1})\]
must be the same, resulting in $a$ and $b$ being equivalent words. 

In the case when $H(W_a, t)$ is not in general position for finitely many $t \in [0, 1]$, then the singularities in the height function must have been adjusted. Such a change can be attributed to a finite sequence of the local moves in Fig.~\ref{locrel} along with exchanges of distant critical points and the isotopy that keeps the diagram in general position (for details, we refer the reader to the work by Carter in~\cite[Section 3]{Cart}).

 \begin{figure}[ht] 
	\raisebox{-15pt}{ \includegraphics[scale=0.3]{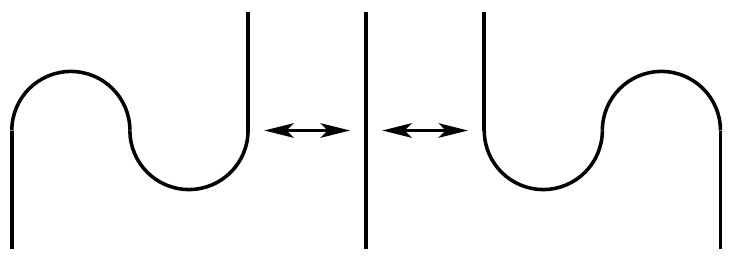}}
	\put(-125, 0){\fontsize{10}{10}$1)$}
	\put(15, 0){\fontsize{10}{10}$2)$}
	\hspace{1cm}
	\raisebox{-15pt}{\includegraphics[scale=0.3]{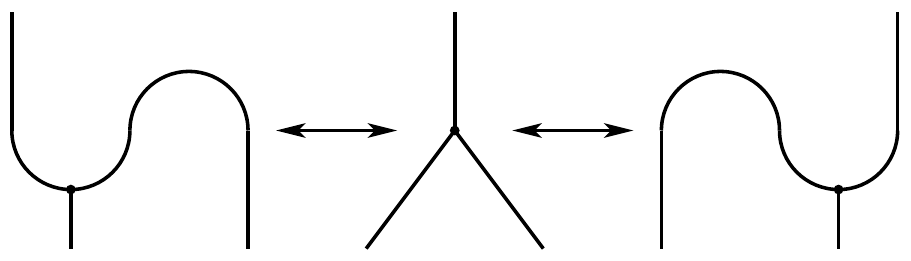}}\\

	\raisebox{-15pt}{
		\includegraphics[scale=0.3]{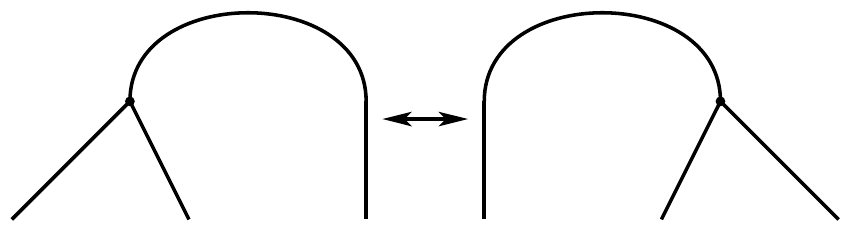}}
	\put(-130, 0){\fontsize{10}{10}$3)$}
	\caption{The three local moves that are sufficient to adjust the singularities of the height function.}
	\label{locrel}
\end{figure}

 Moreover, this sequence can be further expanded to only need the first two of the moves in Fig.~\ref{locrel} if we replace the third move with the finite sequence of moves in Fig.~\ref{move-reduct}.
 \begin{figure}[ht]
	\[ \includegraphics[scale=0.4]{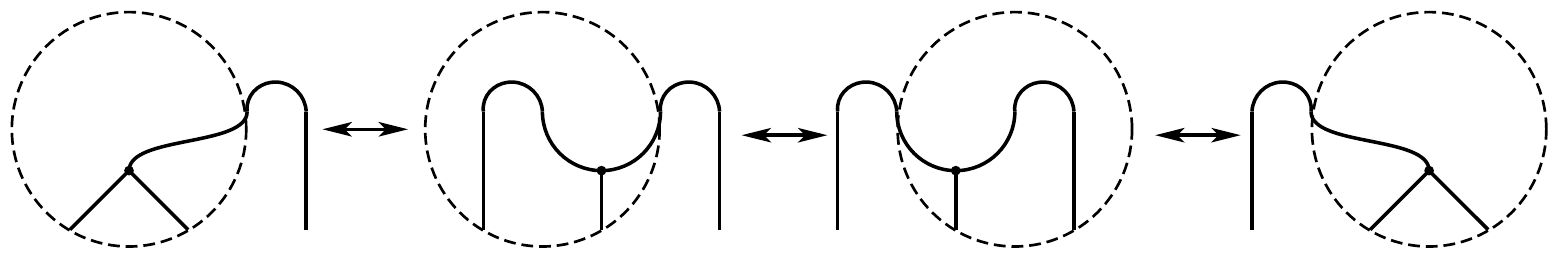}\]
	\label{move-reduct}
		\caption{Move three of Fig.\ref{locrel} is a consequence of move two.}
 \end{figure}



Let $W_1$ and $W_2$ be webs in general position that only differ by the first local move. Then $d(W_2)$ can be obtained from $d(W_1)$ by replacing a subword of the form $(n_\mp \otimes I_\pm) \circ (I_{\pm} \otimes u_\pm)$, $I_\pm$, or $(I_\pm \otimes n_\pm)\circ( u_{\mp} \otimes I_{\pm})$ with another word from that same list. 
However, since these two subwords are related by $R1a$ or $R1b$, then, by the properties of a tensor congruence, the words $d(W_1)$ and $d(W_2)$ must be related by $\langle R1a, R1b, R2 \rangle$. Similarly, since the second move corresponds to a replacement of subwords related by $R2$ move, any two words that only differ by that move must also be related by $\langle R1a, R1b, R2 \rangle$. Finally, any exchange of distant critical points results in equivalent words, due to the equation
 $$(w_1 \otimes \iota_{\mathbf{t}(w_2)})\circ (\iota_{\mathbf{s}(w_1)} \otimes w_2)= w_1 \otimes w_2 = (\iota_{\mathbf{t}(w_1)} \otimes w_2) \circ (w_1 \otimes \iota_{\mathbf{s}(w_2)} ),$$
 which holds for all words $w_1$ and $w_2$. Therefore, $a$ and $b$ must be related by a finite sequence of the relations $\{R1a, R1b, R2 \}$. It follows that $ker(\phi) = \langle R1a, R1b, R2 \rangle$ and thus there is an  isomorphism $FG/\langle R1a, R1b, R2 \rangle \to \Web$. 
\end{proof}


\section{A tangle invariant motivated by the $sl(3)$-polynomial for links}
We start this section by reviewing the definition of the $sl(3)$-polynomial for oriented knots and links. We use a combinatorial construction of this polynomial given by Kuperberg~\cite{Kup}, with a normalization of this invariant which is summarized by Khovanov in~\cite[Section 2]{Khov}. This construction utilizes a one-variable Laurent polynomial, called the Kuperberg bracket, which is evaluated on $sl(3)$ webs using local relations. This, along with properties of presentations of tensor categories allows us to create a functor from the category of oriented tangles, $\mathbf{OTa}$. The target category of this functor is a modified version of $\Web$, $\textbf{LWeb}$, with the $(\epsilon, \nu)$-  morphisms being elements from the $\Z[q, q^{-1}]$-module generated by the $(\epsilon, \nu)$-webs modulo relations that mimic the relations of the Kuperberg bracket. When restricted to links, this functor provides an alternative construction for the $sl(3)$-polynomial.

\subsection{The $sl(3)$-polynomial for knots and links}
\label{sl3link}

The $sl(n)$-polynomial is an invariant for oriented knots and links which is a one-variable specialization of the HOMFLY-PT polynomial invariant (see~\cite{HOMFLY, PT}). In this paper we are concerned only with the $sl(3)$-polynomial and its construction via $sl(3)$ webs and evaluations of these webs, as seen in~\cite{Kup, Khov}.

\begin{defn}
The \textit{Kuperberg bracket} is an isotopy-invariant function $\langle \cdot \rangle$ from the set of closed $sl(3)$ webs, i.e. $(\varnothing, \varnothing)$-webs, to the ring $\Z[q, q^{-1}]$, that is uniquely defined by the following local relations:
\begin{enumerate}
    \item   $\raisebox{-10pt}{\includegraphics[height=0.9cm]{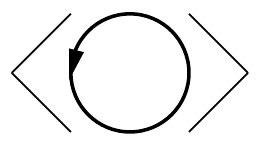}}= q^2 + 1 + q^{-2}$ and $
    \langle \Gamma \cup \raisebox{-7pt}{\includegraphics[height=0.7cm]{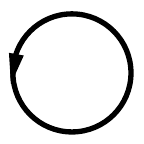}} \rangle = (q^2 + 1 + q^{-2}) \langle \raisebox{-7pt}{\includegraphics[height=0.7cm]{unknot.pdf}} \rangle$,
    \item  
           $ \raisebox{-10pt}{\includegraphics[height=0.9cm]{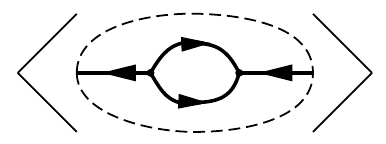}}  = (q + q^{-1}) \raisebox{-10pt}{\includegraphics[height=0.9cm]{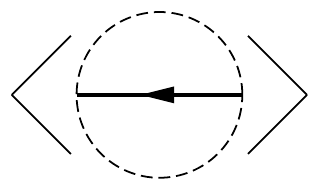}}$,
     \item $
           \raisebox{-15pt}{\includegraphics[height=1.3cm]{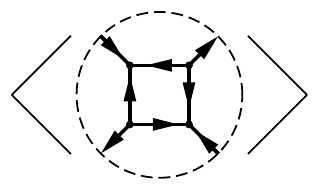}}  =  \raisebox{-15pt}{\includegraphics[height=1.3cm]{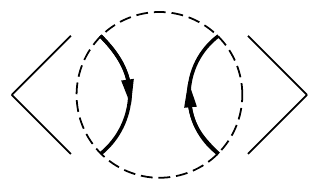}} + \raisebox{-15pt}{\includegraphics[height=1.3cm]{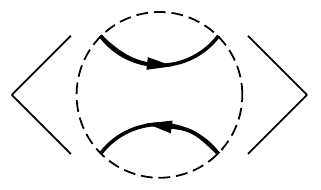}},
        $
\end{enumerate}
where $ \Gamma \cup \raisebox{-7pt}{\includegraphics[height=0.7cm]{unknot.pdf}}$ is the disjoint union between an arbitrary web $\Gamma$ and the standard diagram of the oriented unknot. The diagrams in both sides of relations (2) and (3) are identical, except in a small neighborhood where they differ as shown. Such relations are refer to as \textit{skein relations} in the literature.
 \end{defn}
\begin{rem}
The skein relations in the above definition imply that $\langle \Gamma_1 \cup \Gamma_2 \rangle = \langle \Gamma_1 \rangle  \,  \langle \Gamma_2 \rangle $.
The reason why these local relations define a unique polynomial for a given closed $sl(3)$ web $\Gamma$ is because it can be shown, through an application of Euler's formula, that every closed $sl(3)$ web must contain a loop, digon, or square. 
Thus, the relations (2) and (3) above can be iteratively used to write the evaluation of a closed $sl(3)$ web $\Gamma$ as a $\Z[q, q^{-1}]$-linear combination of evaluations of $sl(3)$ webs with fewer vertices, and simplify $\langle \Gamma \rangle$ until it is written as a linear combination of the brackets evaluated on disjoint union of loops. Finally, the application of relation (1) will give the polynomial $\langle \Gamma \rangle$. 
\end{rem}

Given any link diagram $D$ in $\R \times [0,1]$, we can locally resolve each crossing in one of the two ways depicted in Fig.~\ref{resolutions}.

\begin{figure}[ht]
    \centering
    \includegraphics[scale=0.4]{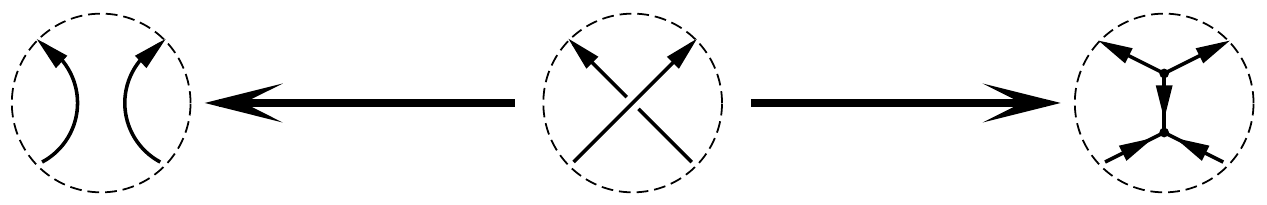}
    \caption{The two choices of resolution for each crossing}
    \label{resolutions}
\end{figure}

By resolving each crossing on $D$ in one of the two possible ways, we obtain an $sl(3)$ web, which we refer to as a \textit{resolution} of $D$. For any oreinted link diagram $D$ with $n$ crossings, there are $2^n$ associated resolutions $\Gamma_i$, where $1\leq i \leq 2^n$. Then, we evaluate $D$ as a $\Z[q,q^{-1}]$-linear combination of evaluations of its resolutions. Specifically,
 \[ p(D):= \sum_{i=1}^{2^n} b_i \langle  \Gamma_i \rangle,\]
where the coefficients $b_i$ are given as the product of the Laurent polynomials obtained by applying the rules in Fig.~\ref{rule} over each resolution $\Gamma_i$ of $D$. 
  \begin{figure}[ht]
            \centering
            \includegraphics[scale = 0.5]{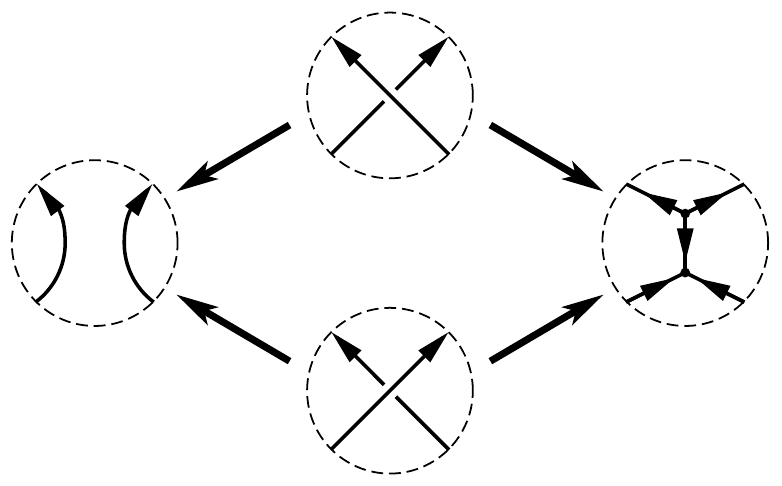}
            \put(-150, 90){\fontsize{10}{10}$(q^{-2})$}
            \put(-150, 20){\fontsize{10}{10}$(q^{2})$}
            \put(-60, 20){\fontsize{10}{10}$(-q^{3})$}
            \put(-60, 90){\fontsize{10}{10}$(-q^{-3})$}
            \caption{The mutiplication rule to get each $b_i$}
            \label{rule}
  \end{figure}

For any oriented knot or link diagrams $D_1$ and $D_2$ that differ by a finite sequence of the Reidemeister moves, it is known that $p(D_1) = p(D_2)$, and thus $p$ is an invariant of oriented links, called the $sl(3)$-polynomial for links. 

\subsection{Constructing the $sl(3)$ tangle invariant} \label{skeinmod}
The scope of this section is to define a functor from the category $\mathbf{OTa}$ of oriented tangles that extends the $sl(3)$-polynomial to oriented tangles.
\begin{defn}
Given sign sequences $\epsilon$ and $\nu$, we define the module $M(\epsilon, \nu)$ to be the free $\Z[q, q^{-1}]$-module generated by isotopy classes of $(\epsilon, \nu)$-webs. We also define the following subsets in $M(\epsilon, \nu)$.
\begin{itemize}
\item We define $N_1(\epsilon, \nu)$ as the set of all elements of the form $$(L \cup W) - (q^2+ 1 + q^{-2})W \in M(\epsilon, \nu),$$ where $L$ an oriented loop disjoint from $W$. 
\item We define $N_2(\epsilon, \nu)$ as the set of all elements of the form $$W_1 - (q+  q^{-1})W_2 \in M(\epsilon, \nu),$$ where $W_2$ can be obtained from $W_1$ by locally replacing a digon with an arc. More formally, there exists words in the alphabet of the set of morphisms $\{u_+, u_-, n_+, n_-, Y_+, Y_-, \lambda_+, \lambda_-\}$, say $a$ and $b$, such that $a = W_1$, $b = W_2$ and $b$ is obtained from $a$ by replacing a subword of the form $\lambda_\pm \circ Y_\mp$ with $I_\pm$. 

\item We define $N_3(\epsilon, \nu)$ as the set of all elements of the form $$W_1 - W_2 - W_3,$$ where $W_2$ is obtained from $W_1$ by locally replacing a square with two of its parallel sides and $W_3$ is obtained from $W_1$ by locally replacing the same square with the other two parallel sides.
More formally, there exists words in the alphabet of the set of morphisms $\{u_+, u_-, n_+, n_-, Y_+, Y_-, \lambda_+, \lambda_-\}$, say $a$, $b$, and $c$, with $a = W_1$, $b = W_2$, and $c= W_3$ such that $b$ is obtained from $a$ by replacing a subword of the form 
$$(\lambda_\mp \otimes \lambda_\pm)\circ (I_\pm \otimes (u_\mp \circ n_\mp) \otimes I_\mp)\circ (Y_\pm \otimes Y_\mp)$$ 
with $I_\mp \otimes I_\pm$, and $c$ is obtained by replacing the same subword of $a$ with $ u_\pm \circ n_\pm$. 
\end{itemize}
\end{defn}

 \begin{figure}[h]
	$$ \raisebox{-20pt}{\includegraphics[height=1.5cm]{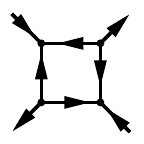}} \mathlarger{\mathlarger{\leftrightarrow}} \raisebox{-20pt}{\includegraphics[height=1.5cm]{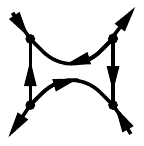}}\mathlarger{\mathlarger{\leftrightarrow}} \raisebox{-20pt}{\includegraphics[height=2.0cm]{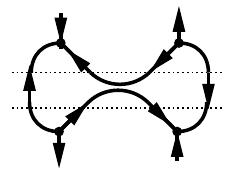}}$$
	\caption{The square and $(\lambda_\mp \otimes \lambda_\pm)\circ (I_\pm \otimes (u_\mp \circ n_\mp) \otimes I_\mp)\circ (Y_\pm \otimes Y_\mp)$ are isotopic.}
	\label{Square isotopy}
\end{figure}

\begin{defn}
Motivated by the formal linear combinations of resolutions in the construction of the $sl(3)$-polynomial invariant for knots and links, we define the \textit{linear web category} $\mathbf{LWeb}$ as the strict monoidal category whose objects are the same of those in $\Web$, but whose morphisms from $\epsilon$ to $\nu$ are elements of the $\Z[q, q^{-1}]$-module $M(\epsilon, \nu)/\langle N_1(\epsilon, \nu), N_2(\epsilon, \nu), N_3(\epsilon, \nu)\rangle$. The composition and tensor product in this category are defined by extending bilinearly the composition and tensor product in $\Web$. That is,
$$(aW_1+bW_2) \circ cW_3 = ac(W_1\circ W_3) + bc(W_2 \circ W_3)$$
and
$$ aW_1 \circ (bW_3 + cW_4)= ab(W_1\circ W_3) + ac(W_2 \circ W_4),$$
for any $a, b, c \in \Z[q, q^{-1}]$ and any composable webs $W_1, W_2, W_3, W_4 \in \Web$.
\end{defn}

Similar to the way that each link diagram $D$ can be associated with a $\Z[q, q^{-1}]$-linear combination of evaluations of closed webs 
$\sum_{i=1}^{2^n} b_i \langle \Gamma_i \rangle$
via the $sl(3)$-polynomial (as shown in Section~\ref{sl3link}),
any oriented $(\epsilon, \nu)$-tangle diagram $T$ can be associated with a morphism from $\epsilon \to \nu$ in $\mathbf{LWeb}$. Even more, this association is a tensor-product-preserving functor from the category of oriented tangles, $\mathbf{OTa}$, to $\mathbf{LWeb}$. However, in order to construct this functor, we will make use of Theorem ~\ref{genrel} and the presentation of $\mathbf{OTa}$
provided by Turaev in~\cite{Tur}.

\begin{thm}~\cite[Theorem 3.2]{Tur}.
\label{tanglerelations}
	The strict tensor category of oriented tangles $\mathbf{OTa}$ is presented by the following elementary  tangles,
	\[
	\includegraphics[scale=0.5]{Pcross2} \quad
	\includegraphics[scale=0.5]{Ncross2} \quad
	\includegraphics[scale=0.5]{u+}    \quad
	\includegraphics[scale=0.5]{u-} \quad
	\includegraphics[scale=0.5]{n+} \quad
	\includegraphics[scale=0.5]{n-} 
	\put(-268, -15){\fontsize{10}{10}$X_+$}
	\put(-218, -15){\fontsize{10}{10}$X_-$}
	\put(-170, -15){\fontsize{10}{10}$u_+$}
	\put(-120, -15){\fontsize{10}{10}$u_-$}
	\put(-70, -15){\fontsize{10}{10}$n_+$}
	\put(-20, -15){\fontsize{10}{10}$n_-$}
	\]
	along with the following relations:
	\begin{itemize}
		\item[$(a)$] $(n_\mp  I_\pm)\circ(I_\pm u_\pm) \sim I_\pm \sim (I_\pm  n_\pm) \circ (u_\mp  I_\pm)$\\
		\item[$(b)$] $( I_-  I_-  n_-) \circ (I_- I_- I_+  n_-  I_-) \circ (I_- I_-  X_\pm  I_-  I_-) \circ ( I_- u_+ I_+ I_- I_-) \circ (u_+ I_- I_-)$\\
		$\sim (n_+I_-I_-) \circ(I_-  n_+I_+I_-I_-) \circ (I_-I_-X_\pm I_-I_-) \circ (I_-I_-I_+u_-I_-) \circ(I_-I_-u_-)$\\
		\item[$(c)$] $X_+ \circ X_- \sim X_- \circ X_+ \sim I_+ I_+$\\
		\item[$(d)$]$(X_+ I_+) \circ (I_+ X_+ ) \circ (X_+ I_+) \sim (I_+X_+) \circ (X_+ I_+) \circ (I_+ X_+)$\\
		\item[$(e)$] $(I_+ n_-) \circ (X_\pm I_-) \circ (I_+ u_-)  \sim I_+$\\
		\item[$(f)$] $(I_+ I_- n_-) \circ (I_- X_+ I_-) \circ (u_+ I_+ I_-)  \circ (n_+ I_+ I_-) \circ (I_- X_- I_-) \circ (I_- I_+ u_-) \sim I_- I_+$\\
		\item[$(g)$] $(n_+ I_+ I_-) \circ (I_- X_- I_-) \circ (I_- I_+ u_-)  \circ  (I_+ I_- n_-) \circ (I_- X_+ I_-) \circ (u_+ I_+ I_-)  \sim I_+ I_-$
	\end{itemize}
	In the above relations, in order to shorten the notation, we use $ab$ to denote the tensor product $a \otimes b$ of two tangles $a$ and $b$.
\end{thm}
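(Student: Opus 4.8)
The plan is to mirror the structure of the proof of the presentation theorem for $\Web$, replacing the general-position argument for webs by the corresponding Morse-theoretic analysis of tangle diagrams. First I would form the directed graph $G$ whose vertices are the objects of $\mathbf{OTa}$ (finite sign strings) and whose edges are the six elementary tangles $X_+, X_-, u_+, u_-, n_+, n_-$. By Theorem~\ref{FreeProperty}, sending each edge to the tangle it names determines a unique tensor-product-preserving functor $\Psi: FG \to \mathbf{OTa}$ that is the identity on objects. By Theorem~\ref{fit} and Definition~\ref{presentation}, it then suffices to prove two things: that $\Psi$ is surjective, and that $\ker(\Psi) = \langle (a),\ldots,(g)\rangle$.

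Surjectivity I would obtain exactly as in the $\Web$ case. Any oriented tangle is represented by a diagram which, after an isotopy, can be placed in general position with respect to the height projection $\pi:\R \times [0,1] \to [0,1]$: only finitely many horizontal slices are singular, and each such slice contains exactly one local extremum of $\pi$ (a cap or cup) or one transverse crossing. Cutting along regular slices expresses the diagram as a composite of strips, each of which is a single generator tensored on the left and right with identity tangles, so $[T]$ lies in the image of $\Psi$. As before, this produces a distinguished word $d(T) \in \Mor(FG)$ for each general-position diagram $T$, namely the composite of its slices.

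The easy inclusion $\langle (a),\ldots,(g)\rangle \subseteq \ker(\Psi)$ follows by checking that each listed relation is a planar isotopy or a Reidemeister move of oriented tangles, so both sides have equal image under $\Psi$. The hard part will be the reverse inclusion. Given $(f,g)\in\ker(\Psi)$, Lemma~\ref{generalform} lets me assume $f$ and $g$ are already slice-words $d(T_f)$ and $d(T_g)$ for general-position diagrams $T_f, T_g$ representing the same tangle. I would then track a chosen ambient isotopy $H$ from $T_f$ to $T_g$: on the open intervals of $t$ where $H(\cdot,t)$ stays in general position the slice-word is constant, so the word changes only at finitely many critical values of $t$, where general position breaks down in one of a short list of elementary ways---birth or death of a cancelling cap/cup pair, a crossing passing an extremum, an exchange of the heights of two ``distant'' singularities, or a Reidemeister move. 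The exchange of distant singularities yields equivalent words via the interchange identity $(w_1\otimes\iota_{\mathbf{t}(w_2)})\circ(\iota_{\mathbf{s}(w_1)}\otimes w_2) = (\iota_{\mathbf{t}(w_1)}\otimes w_2)\circ(w_1\otimes\iota_{\mathbf{s}(w_2)})$ already used for $\Web$, and each remaining move changes $d(T)$ by replacing a subword by one related to it through one of $(a)$--$(g)$: the cap/cup straightenings give $(a)$, the Reidemeister~II and~III moves give $(c)$ and $(d)$, and the moves sliding a crossing past a cap or cup give $(b)$, $(e)$, $(f)$ and $(g)$. Hence each critical value contributes a step inside $\langle (a),\ldots,(g)\rangle$, forcing $(f,g)\in\langle (a),\ldots,(g)\rangle$.

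The genuine obstacle is the completeness of this list of elementary moves: one must know that every ambient isotopy of oriented tangles can be realized, up to general-position-preserving deformation, by finitely many moves of precisely these types and orientations. This is the Morse/movie-move content behind the statement, and I would import it from the analysis in \cite[Section 3]{Cart} (the same reference invoked in the $\Web$ proof) rather than reprove it, which is why the result is attributed to \cite{Tur}.
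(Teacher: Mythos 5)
The paper does not prove this statement at all: it is imported verbatim from \cite[Theorem 3.2]{Tur}, so there is no internal proof to compare against. Your outline is the standard strategy (and is essentially how Turaev argues), and it correctly parallels the paper's proof of the presentation of $\Web$: free category, surjectivity via generic height functions and slice-words, and a kernel computation by tracking how the slice-word changes across the critical times of an isotopy. As a plan it is sound.

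The genuine gap is the step you flag at the end, and your proposed fix does not close it. Carter's result in \cite[Section 3]{Cart} concerns only the singularities of the height function on embedded graphs (births/deaths of cap--cup pairs, exchanges of distant critical points, and the moves in Fig.~\ref{locrel}); it says nothing about crossings. For tangles the list of elementary transitions must also account for all oriented Reidemeister moves and for the interaction of crossings with extrema, and proving that the specific finite list $(a)$--$(g)$ suffices --- in particular that only these orientations of R1, R2, R3 and of the ``rotate a crossing past an extremum'' move are needed, the rest being derivable --- is precisely the content of Turaev's theorem. That reduction (e.g.\ deriving the missing orientations of R2 and R3 from $(c)$, $(d)$, $(f)$, $(g)$ together with $(a)$ and $(b)$) is a nontrivial combinatorial argument that your sketch leaves entirely implicit. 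There is also a minor mislabeling: relation $(e)$ is the oriented Reidemeister I move (removal of a curl), not a crossing sliding past a cap, and $(f)$, $(g)$ are the Reidemeister II moves for antiparallel strands rather than crossing--extremum interchanges; only $(b)$ is of the latter type. Since the paper deliberately cites this theorem rather than reproving it, the appropriate resolution is the one the paper takes: attribute both the statement and the completeness of the move list to \cite{Tur}.
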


\begin{rem}
	Note that these relations correspond to the following isotopies of tangle diagrams:
	
	

		\begin{align*}
			&\text{$a_+$)  } \raisebox{-15pt}{\includegraphics[height=.5in]{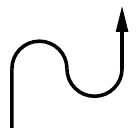}}  \leftrightarrow \raisebox{-15pt}{\includegraphics[height=.5in]{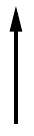}}  \leftrightarrow  \raisebox{-15pt}{\includegraphics[height=.5in]{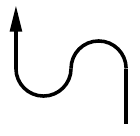}};
			\hspace{2cm} \text{$a_-$)  }\raisebox{-15pt}{\includegraphics[height=.5in]{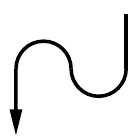}}   \leftrightarrow \raisebox{-15pt}{\includegraphics[height=.5in]{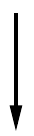}}  \leftrightarrow  \raisebox{-15pt}{\includegraphics[height=.5in]{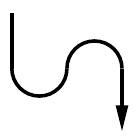}};\\
			&\text{$b$)  } \raisebox{-15pt}{\includegraphics[height=.5in]{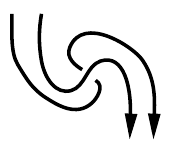}}   \leftrightarrow \raisebox{-15pt}{\includegraphics[height=.5in]{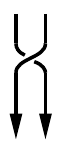}}  \leftrightarrow \raisebox{-15pt}{\includegraphics[height=.5in]{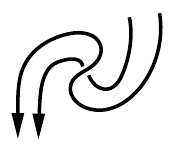}}; \hspace{2cm} \text{$c$)  } \raisebox{-15pt}{\includegraphics[height=.5in]{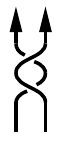}}   \leftrightarrow \raisebox{-15pt}{\includegraphics[height=.5in]{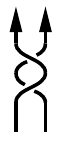}}  \leftrightarrow  \raisebox{-15pt}{\includegraphics[height=.5in]{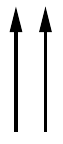}};\\
			&\text{$d$)  } \raisebox{-15pt}{\includegraphics[height=.5in]{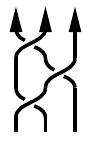}}  \leftrightarrow \raisebox{-15pt}{\includegraphics[height=.5in]{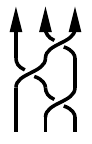}}; \hspace{1.4cm} \text{$e_+$)  } \raisebox{-15pt}{\includegraphics[height=.5in]{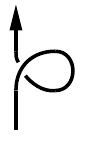}}   \leftrightarrow \raisebox{-15pt}{\includegraphics[height=.5in]{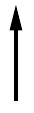}};
			\hspace{1.4cm} \text{$e_-$)  } \raisebox{-15pt}{\includegraphics[height=.5in]{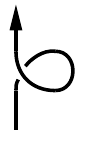}}   \leftrightarrow \raisebox{-15pt}{\includegraphics[height=.5in]{f2.pdf}};\\
			&\text{$f$)  } \raisebox{-15pt}{\includegraphics[height=.5in]{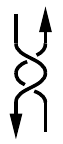}}   \leftrightarrow \raisebox{-15pt}{\includegraphics[height=.5in]{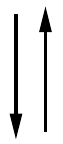}}; \hspace{2cm} \text{$g$)  } \raisebox{-15pt}{\includegraphics[height=.5in]{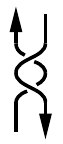}}   \leftrightarrow \raisebox{-15pt}{\includegraphics[height=.5in]{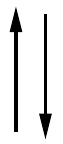}}.
		\end{align*}

	%
	
\end{rem}

\begin{thm} \label{functorF}
The map $F: \mathbf{OTa} \to \mathbf{LWeb}$ sending each object to itself and  sending any oriented $(\epsilon, \nu)$-tangle diagram $T$ to
$$\sum_{i=1}^{2^n} b_i \Gamma_i  + \langle N_1(\epsilon, \nu), N_2(\epsilon, \nu), N_3(\epsilon, \nu)\rangle,$$ where the $\Gamma_i$'s are the enumerated resolutions of tangle $T$ (as $(\epsilon, \nu)$-webs) and each $b_i$ is computed as explained in Fig~\ref{rule}, is a tensor-product-preserving functor.
\end{thm}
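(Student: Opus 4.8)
The plan is to realize $F$ as the functor furnished by the universal property of presentations, Theorem~\ref{genrel}, applied to Turaev's presentation of $\mathbf{OTa}$ from Theorem~\ref{tanglerelations}. Since $\mathbf{LWeb}$ and $\mathbf{OTa}$ share the same object monoid $S$ of finite sign sequences, both lie in $S-\mathbf{Cat}^\otimes$, so it suffices to specify $F$ on the six generating tangles $M = \{X_+, X_-, u_+, u_-, n_+, n_-\}$ and to check that the relations $(a)$–$(g)$ of Theorem~\ref{tanglerelations} are respected. On the cup and cap generators I would set $F(u_\pm)$ and $F(n_\pm)$ to be the classes in $\mathbf{LWeb}$ of the corresponding webs $u_\pm, n_\pm$ (each a single web with coefficient $1$). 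On the crossing generators I would set $F(X_\pm)$ to be the $\Z[q,q^{-1}]$-linear combination of the two resolutions of a single crossing, weighted by the monomials prescribed in Fig.~\ref{rule}; concretely each $F(X_\pm)$ is a sum of the oriented (identity) resolution $I_\pm \otimes I_\pm$ and the trivalent resolution (two $Y$-vertices joined by an edge), with the appropriate coefficients.

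With $\phi := F|_M$ in hand, the heart of the proof is verifying that $\phi(f) = \phi(g)$ in $\mathbf{LWeb}$ for each relation $(f,g)$ among $(a)$–$(g)$. Each such check proceeds by expanding every crossing on both sides into its two resolutions, distributing over the bilinear composition and tensor product of $\mathbf{LWeb}$, and then simplifying the resulting $\Z[q,q^{-1}]$-combinations of webs modulo $\langle N_1, N_2, N_3\rangle$. Relations $(a)$ are already isotopies of webs and reduce to the $\Web$ relations R1a, R1b; relation $(c)$ (Reidemeister~II) expands into four web terms and collapses to $I_+ \otimes I_+$ using the square relation $N_3$ together with the digon and loop relations $N_2, N_1$; relation $(d)$ (Reidemeister~III) likewise expands and is reconciled term-by-term using $N_3$; and the sliding relations $(b), (e), (f), (g)$, which govern how crossings pass through cups and caps, require the loop and digon evaluations $N_1, N_2$ to produce exactly the cancellations that eliminate the extra monomials. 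It is precisely here that the specific coefficients $-q^{\pm 3}, q^{\pm 2}$ of Fig.~\ref{rule} and the loop value $q^2+1+q^{-2}$ must conspire, so these computations are the substance of the argument; the chosen normalization is what makes $F$ descend to an invariant of unframed tangles.

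Once all seven relations are verified, Theorem~\ref{genrel} produces a unique tensor-product-preserving functor $\phi': \mathbf{OTa} \to \mathbf{LWeb}$ that is the identity on objects and satisfies $\phi'|_M = \phi$. It then remains to identify $\phi'$ with the state-sum description in the statement, i.e.\ to show $\phi'(T) = \sum_{i=1}^{2^n} b_i \Gamma_i$ for every tangle diagram $T$. I would argue this by induction on a decomposition of $T$ as a composite of tensor products of generators and identities, available by Lemma~\ref{generalform}: the crossings of a composite $T_1 \circ T_2$ (respectively $T_1 \otimes T_2$) are the disjoint union of those of $T_1$ and $T_2$, each global resolution $\Gamma_i$ factors as a composite (respectively tensor) of resolutions of the pieces, and the weight $b_i$ factors as the corresponding product of local weights. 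Hence the state sum is multiplicative with respect to $\circ$ and $\otimes$ in $\mathbf{LWeb}$ and agrees with $\phi$ on generators, so by the uniqueness clause of Theorem~\ref{genrel} it coincides with $\phi'$; in particular $F$ is well defined on morphisms of $\mathbf{OTa}$ and is a tensor-product-preserving functor.

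The main obstacle is the relation-checking of the second paragraph, and within it the Reidemeister~III relation $(d)$ together with the sliding relations $(b), (e), (f), (g)$. Reidemeister~III is the classical difficulty: after resolving, one confronts a collection of square, digon, and bare-strand configurations that must be matched across the move, and the square relation $N_3$ is exactly what forces the two sides to agree. The sliding relations are delicate for a different reason, namely that the chosen normalization must render the invariant framing-independent, so the monomial bookkeeping from Fig.~\ref{rule} and the repeated use of $N_1, N_2$ must cancel precisely; a single sign or exponent error would obstruct well-definedness.
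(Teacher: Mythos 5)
Your proposal follows essentially the same route as the paper: define $\phi$ on Turaev's generators exactly as you describe, verify relations $(a)$--$(g)$ by expanding crossings and simplifying modulo $\langle N_1, N_2, N_3\rangle$, and then identify $\phi$ with the state sum $F$ by an induction over a decomposition of the tangle into generators (the paper inducts on the number of crossings, peeling off one crossing at a time, which is the same multiplicativity argument you outline). The only quibble is bookkeeping: relation $(c)$ needs only the digon relation $N_2$ (stacking the two trivalent resolutions creates a digon, not a square), and relation $(d)$ needs both $N_2$ and $N_3$, but this does not affect the correctness of the approach.
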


\begin{proof}

Since we have presentations for the categories $\mathbf{OTa}$ and $ \mathbf{LWeb}$ we can use Theorem ~\ref{genrel} to define a functor,
$\phi: \mathbf{OTa} \to \mathbf{LWeb},$ by requiring the following assignment on generators 
 \begin{align*}
         \raisebox{-6pt}{\includegraphics[scale = 0.4]{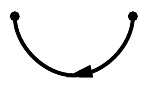}} &\mapsto  \raisebox{-6pt} {\includegraphics[scale = 0.4]{u-.pdf} }&& \raisebox{-6pt}{\includegraphics[scale = 0.4]{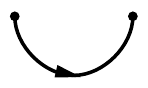}} \mapsto    \raisebox{-6pt}{\includegraphics[scale = 0.4]{u+.pdf}}\\
         \raisebox{-6pt}{\includegraphics[scale = 0.4]{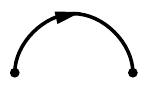}} &\mapsto    \raisebox{-6pt}{\includegraphics[scale = 0.4]{n-.pdf} }&& \raisebox{-6pt}{\includegraphics[scale = 0.4]{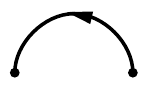}} \mapsto    \raisebox{-6pt}{\includegraphics[scale = 0.4]{n+.pdf}}\\
          \raisebox{-9pt}{\includegraphics[scale = 0.4]{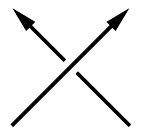}} &\mapsto  q^2 \raisebox{-9pt}{\includegraphics[scale = 0.4]{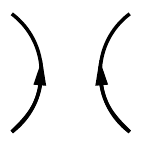}} - q^3 \raisebox{-9pt}{\includegraphics[scale = 0.4]{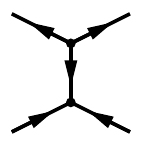}} 
          &&\raisebox{-9pt}{\includegraphics[scale = 0.4]{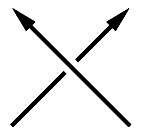}} \mapsto   q^{-2} \raisebox{-9pt}{\includegraphics[scale = 0.4]{Vertbarup.pdf}} - q^{-3} \raisebox{-9pt}{\includegraphics[scale = 0.4]{Fcross2.pdf}}.\\       
    \end{align*}
To make sure that Theorem ~\ref{genrel} applies we must show that the relations of $\mathbf{OTa}$  still hold when taken through $\phi$. We will demonstrate why relations $(e_+)$, $(c)$ and $(b)$ in Theorem ~\ref{tanglerelations} hold in $\mathbf{LWeb}$. The other relations can be verified in a similar manner.

For the relation $(e_+)$, we wish to show that $\displaystyle{\phi \left(\raisebox{-7pt}{\includegraphics[height=.25in]{f1.pdf}} \right)  =  \phi\left( \raisebox{-7pt}{\includegraphics[height=.25in]{f2.pdf}} \right)}$. By the definition of $\phi$ note that we have the following:
$$ \phi \left(\raisebox{-15pt}{\includegraphics[height=.5in]{f1.pdf}} \right)  = q^2\raisebox{-15pt}{\includegraphics[height=.5in]{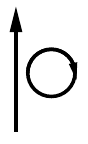}}  -q^3 \raisebox{-15pt}{\includegraphics[height=.5in]{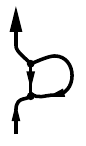}}= q^2\raisebox{-15pt}{\includegraphics[height=.5in]{Eq1-2.pdf}}  -q^3 \raisebox{-15pt}{\includegraphics[height=.5in]{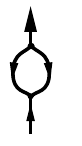}} .$$

By the relations imposed on the morphisms of $\mathbf{LWeb}$ from quotienting by $N_1(\epsilon, \nu)$ and $N_2(\epsilon, \nu)$, we have
$$\raisebox{-15pt}{\includegraphics[height=.5in]{Eq1-3.pdf}} = (q+q^{-1}) \raisebox{-15pt}{\includegraphics[height=.5in]{a2+.pdf}} \text{ and } \raisebox{-15pt}{\includegraphics[height=.5in]{Eq1-2.pdf}} = (q^2 +1 +q^{-2}) \raisebox{-15pt}{\includegraphics[height=.5in]{a2+.pdf}}.
$$
Hence,
\[
q^2\raisebox{-15pt}{\includegraphics[height=.5in]{Eq1-2.pdf}}  -q^3 \raisebox{-15pt}{\includegraphics[height=.5in]{Eq1-3.pdf}} = q^2(q^2 + 1 + q^{-1})\raisebox{-15pt}{\includegraphics[height=.5in]{a2+.pdf}} -q^{3} (q+q^{-1}) \raisebox{-15pt}{\includegraphics[height=.5in]{a2+.pdf}}
= \raisebox{-15pt}{\includegraphics[height=.5in]{a2+.pdf}} = \phi \left( \raisebox{-15pt}{\includegraphics[height=.5in]{a2+.pdf}}\right).
\]

Therefore, $\phi$ preserves relation $(e_+)$. To prove that $\phi$ preserves relation $(c)$, we need to show that 
$$\phi(X_+ \circ X_-) = \phi(X_- \circ X_+)  = \phi(I_+ \otimes I_+) = I_+ \otimes I_+ = I_{(+, +)}.$$
By definition, we require that $\phi$ preserves composition. Hence,
\begin{align*}
\phi(X_+ \circ X_-) &= \phi(X_+) \circ \phi( X_-)\\
&= \left(q^{2}\mychar{Vertbarup.pdf} - q^3\mychar{Fcross2.pdf} \right)\circ \left(q^{-2}\mychar{Vertbarup.pdf} - q^{-3}\mychar{Fcross2.pdf} \right)\\
&= \mychar{Vertbarup.pdf}\circ\mychar{Vertbarup.pdf} - q\mychar{Fcross2.pdf}\circ\mychar{Vertbarup.pdf} - q^{-1}\mychar{Vertbarup.pdf}\circ\mychar{Fcross2.pdf} + \mychar{Fcross2.pdf}\circ  \mychar{Fcross2.pdf}\\
&= \mychar{Vertbarup.pdf} - q\mychar{Fcross2.pdf} - q^{-1}\mychar{Fcross2.pdf} + \raisebox{-7pt}{\includegraphics[height=.35in]{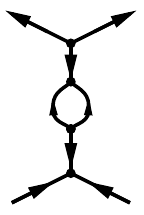}}\\
&= \mychar{Vertbarup.pdf} - (q+q^{-1})\mychar{Fcross2.pdf} + (q+q^{-1})\mychar{Fcross2.pdf}\\
&= \mychar{Vertbarup.pdf} = I_{(+, +)}.
\end{align*}
The proof showing that $\phi(X_- \circ X_+) =  I_{(+, +)}$ is done in a similar way, and thus we omit it to avoid repetition.

To verify that the functor $\phi$ preserves relation $(d)$, we need  to show the following:
$$\phi(X_+ \otimes I_+)\circ\phi(I_+ \otimes X_+)\circ \phi(X_+ \otimes I_+) = \phi(I_+ \otimes X_+)\circ \phi(X_+ \otimes I_+)\circ\phi(I_+ \otimes X_+). $$

By using the definition of the functor $\phi$ on the generators of the category $\mathbf{OTa}$, we obtain the following: 
\begin{align*}
	&\phi(X_+ \otimes I_+)\circ\phi(I_+ \otimes X_+)\circ \phi(X_+ \otimes I_+)\\
	&= \left(q^{2}\mychar{a2+}\mychar{a2+}\mychar{a2+} - q^3\mychar{Fcross2.pdf}\mychar{a2+} \right)\circ \left(q^{2}\mychar{a2+}\mychar{a2+}\mychar{a2+} - q^{3}\mychar{a2+}\mychar{Fcross2.pdf} \right) \left(q^{2}\mychar{a2+}\mychar{a2+}\mychar{a2+} - q^3\mychar{Fcross2.pdf}\mychar{a2+} \right)\\
	&= q^{6}\mychar{a2+}\mychar{a2+}\mychar{a2+} - q^7\mychar{Fcross2.pdf}\mychar{a2+} - q^7\mychar{a2+}\mychar{Fcross2.pdf} + q^8 \raisebox{-6pt}{\includegraphics[scale = 0.4]{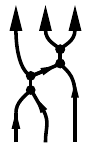}}
	-  q^7\mychar{Fcross2.pdf}\mychar{a2+} + q^8\raisebox{-6pt}{\includegraphics[scale = 0.4]{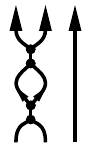}}\\
	&+q^8 \raisebox{-6pt}{\includegraphics[scale = 0.4]{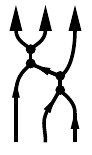}}-
	q^9\raisebox{-6pt}{\includegraphics[scale = 0.4]{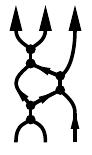}}. 
\end{align*}

Similarly, we have:
\begin{align*}
	& \phi(I_+ \otimes X_+)\circ \phi(X_+ \otimes I_+)\circ\phi(I_+ \otimes X_+)\\
	&= q^{6}\mychar{a2+}\mychar{a2+}\mychar{a2+} - q^7\mychar{a2+}\mychar{Fcross2.pdf} - q^7\mychar{Fcross2.pdf}\mychar{a2+} + q^8 \raisebox{-6pt}{\includegraphics[scale = 0.4]{Eq3-2.pdf}}
	-  q^7\mychar{a2+}\mychar{Fcross2.pdf} + q^8\raisebox{-6pt}{\includegraphics[scale = 0.4]{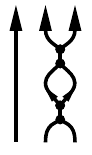}}\\
	& + q^8 \raisebox{-6pt}{\includegraphics[scale = 0.4]{Eq3-2r.pdf}}
	- q^9\raisebox{-6pt}{\includegraphics[scale = 0.4]{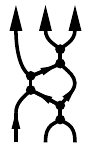}}.
\end{align*}

Now subtracting $\phi(I_+ \otimes X_+)\circ \phi(X_+ \otimes I_+)\circ\phi(I_+ \otimes X_+)$ from  $\phi(X_+ \otimes I_+)\circ\phi(I_+ \otimes X_+)\circ \phi(X_+ \otimes I_+)$, we get:
$$-  q^7\mychar{Fcross2.pdf}\mychar{a2+} + q^8\raisebox{-6pt}{\includegraphics[scale = 0.4]{Eq3-3.pdf}}-  q^9\raisebox{-6pt}{\includegraphics[scale = 0.4]{Eq3-1.pdf}} - \left( 	-  q^7\mychar{a2+}\mychar{Fcross2.pdf} + q^8\raisebox{-6pt}{\includegraphics[scale = 0.4]{Eq3-3r.pdf}}
- q^9\raisebox{-6pt}{\includegraphics[scale = 0.4]{Eq3-1r.pdf}} \right).$$
By using the digon relation imposed on $\mathbf{LWeb}$, we get 
  $$-  q^7\mychar{Fcross2.pdf}\mychar{a2+} + q^8\raisebox{-6pt}{\includegraphics[scale = 0.4]{Eq3-3.pdf}} = -  q^7\mychar{Fcross2.pdf}\mychar{a2+} + q^8(q + q^{-1})\mychar{Fcross2.pdf}\mychar{a2+} = q^9\mychar{Fcross2.pdf}\mychar{a2+}.$$
  Similarly, 
 $$-  q^7\mychar{a2+}\mychar{Fcross2.pdf} + q^8\raisebox{-6pt}{\includegraphics[scale = 0.4]{Eq3-3r.pdf}} = -  q^7\mychar{a2+}\mychar{Fcross2.pdf} + q^8(q + q^{-1})\mychar{a2+}\mychar{Fcross2.pdf} = q^9\mychar{a2+}\mychar{Fcross2.pdf}.$$
 
 This then simplifies the difference $\phi(X_+ \otimes I_+)\circ\phi(I_+ \otimes X_+)\circ \phi(X_+ \otimes I_+) - \phi(I_+ \otimes X_+)\circ \phi(X_+ \otimes I_+)\circ\phi(I_+ \otimes X_+)$ to 
$$ q^9\mychar{Fcross2.pdf}\mychar{a2+} -  q^9\raisebox{-6pt}{\includegraphics[scale = 0.4]{Eq3-1.pdf}} -	 q^9\mychar{a2+}\mychar{Fcross2.pdf} 
+ q^9\raisebox{-6pt}{\includegraphics[scale = 0.4]{Eq3-1r.pdf}}.  \\ $$
Then, since
$$\raisebox{-16pt}{\includegraphics[height = 1.5cm]{Eq3-1.pdf}} \text{ is isotopic to } \raisebox{-16pt}{\includegraphics[height = 1.5cm]{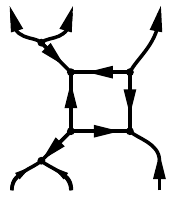}},$$
we can use the relation imposed on $\mathbf{Lweb}$ by quotienting the morphisms by $N_3(\epsilon, \nu)$, to obtain the following:
$$q^9\raisebox{-9pt}{\includegraphics[scale = 0.4]{Eq3-1.pdf}} = q^9\mychar{Fcross2.pdf}\mychar{a2+} +  q^9\raisebox{-9pt}{\includegraphics[scale = 0.3]{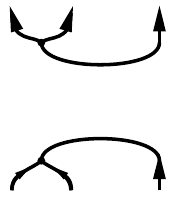}},$$
$$q^9\raisebox{-9pt}{\includegraphics[scale = 0.4]{Eq3-1r.pdf}} = q^9\mychar{a2+}\mychar{Fcross2.pdf} +  q^9\raisebox{-9pt}{\includegraphics[scale = 0.3]{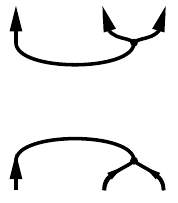}}.$$

Hence, 
\begin{align*}
	&\phi(X_+ \otimes I_+)\circ\phi(I_+ \otimes X_+)\circ \phi(X_+ \otimes I_+) - \phi(I_+ \otimes X_+)\circ \phi(X_+ \otimes I_+)\circ\phi(I_+ \otimes X_+) \\
	&= q^9\mychar{Fcross2.pdf}\mychar{a2+} -  q^9\raisebox{-9pt}{\includegraphics[scale = 0.4]{Eq3-1.pdf}} -	 q^9\mychar{a2+}\mychar{Fcross2.pdf} 
	+ q^9\raisebox{-9pt}{\includegraphics[scale = 0.4]{Eq3-1r.pdf}}  \\
	&= q^9\mychar{Fcross2.pdf}\mychar{a2+} -   q^9\mychar{Fcross2.pdf}\mychar{a2+} -  q^9\raisebox{-6pt}{\includegraphics[scale = 0.3]{Eq3-5.pdf}}-	 q^9\mychar{a2+}\mychar{Fcross2.pdf} 
	+  q^9\mychar{a2+}\mychar{Fcross2.pdf} +  q^9\raisebox{-9pt}{\includegraphics[scale = 0.3]{Eq3-5r.pdf}}  \\
	&= q^9\raisebox{-9pt}{\includegraphics[scale = 0.3]{Eq3-5.pdf}} - q^9\raisebox{-9pt}{\includegraphics[scale = 0.3]{Eq3-5r.pdf}} = 0,
\end{align*}
where the last equality holds since the two webs are planar isotopic. Therefore, the functor $\phi$ preserves relation (d).

It remains to show that $F = \phi$. However, first we will modify notation. Let $T$ be any tangle diagram, we may request $T$ to be in general position. Then, if we order the crossings $(v_1, \ldots, v_n)$ in $T$ from top to bottom, there is a unique resolution of $T$ for any element of $\{0,1\}^n$. More explicitly, any element $(a_1, \ldots, a_n) \in \{0,1\}^n$ corresponds to the resolution of $T$ that takes the $jth$ crossing to $\mychar{Vertbarup.pdf}$ if $a_j = 0$ and to $\mychar{Fcross2.pdf}$ if $a_j = 1$. If we denote this resolution as $\Gamma_{(a_1, \ldots, a_n)}$ and similarly, we denote the coefficient associated with this resolution as $b_{(a_1, \ldots, a_n)}$, where the coefficient is determined by the rules in Fig.~\ref{rule}, then we may rewrite 
$$F(\Gamma) = \sum_{\alpha \in \{0,1\}^n} b_\alpha \Gamma_\alpha.$$

We need to show that $F(T) = \phi(T)$ for any $T \in \mathbf{OTa}$, and we prove this by induction on the number of crossings of $T$.

\noindent
\textbf{Base Case:} If $T$ has zero crossings, then both $F(T) = T$ and $\phi(T) = T$, since we can regard $T$ as a web with no vertices.

\noindent
\textbf{Inductive Step:} Suppose that $F(A) = \phi(A)$ for all $A \in \mathbf{OTa}$ that have less than $n$ crossings. Let $T$ be any tangle diagram with $n$ crossings. Then by Lemma ~\ref{generalform}, there exists generators $W_1, W_2,\ldots, W_m$, such that
\[ T= (I_{v_1}\otimes W_1 \otimes I_{u_1})\circ (I_{v_2}\otimes W_2 \otimes I_{u_2}) \circ \ldots \circ (I_{v_n}\otimes W_n \otimes I_{u_m}),\]
where the $v_i$'s and $u_i's$ are objects in $\mathbf{OTa}$. Let $k$ be the lowest number such that $W_k = X_+$ or $X_-$. Then we can write
\[ T = T' \circ (I_{v_{k}}\otimes W_k \otimes I_{u_{k}}) \circ T'',\]
where 
\[T' := (I_{v_1}\otimes W_1 \otimes I_{u_1})\circ \ldots \circ  (I_{v_{k-1}}\otimes W_{k-1} \otimes I_{u_{k-1}})\]
and 
\[T'' := (I_{v_{k+1}}\otimes W_{k+1} \otimes I_{u_{k+1}}) \circ \ldots \circ (I_{v_m}\otimes W_m \otimes I_{u_m}).\]
 That is, $T'$ has no crossings and $T''$ has $n-1$ crossings. Then we have 
\begin{align*}
F(T) &= \sum_{\alpha \in \{0,1\}^n} b_\alpha \Gamma_\alpha\\
&= \sum_{\alpha \in \{0\}\times\{0,1\}^{n-1}} b_\alpha \Gamma_\alpha + \sum_{\alpha \in \{1\}\times\{0,1\}^{n-1}} b_\alpha \Gamma_\alpha \\
&= \sum_{\alpha \in \{0,1\}^{n-1}} b_{(0\otimes \alpha)} \Gamma_{(0\otimes\alpha)} + \sum_{\alpha \in \{0,1\}^{n-1}} b_{(1 \otimes \alpha)} \Gamma_{(1 \otimes \alpha)}.
\end{align*}
Without loss of generality, let $W_k = X_+$. Then we have 
\begin{align*}
F(T) & =\sum_{\alpha \in \{0,1\}^{n-1}} b_{(0\otimes \alpha)} \Gamma_{(0\otimes\alpha)} + \sum_{\alpha \in \{0,1\}^{n-1}} b_{(1 \otimes \alpha)} \Gamma_{(1 \otimes \alpha)}\\
&= \sum_{\alpha \in \{0,1\}^{n-1}} q^2b_{\alpha}(T' \circ (I_{v_{k}}\otimes \mychar{Vertbarup.pdf} \otimes I_{u_{k}}) \circ \Gamma'')_{(0 \otimes \alpha)} \\
&- \sum_{\alpha \in \{0,1\}^{n-1}} q^3b_{ \alpha} (T' \circ (I_{v_{k}}\otimes \mychar{Fcross2.pdf}\otimes I_{u_{k}}) \circ \Gamma'')_{(1 \otimes \alpha)}\\
&= \sum_{\alpha \in \{0,1\}^{n-1}} q^2b_{\alpha}T' \circ (I_{v_{k}}\otimes \mychar{Vertbarup.pdf} \otimes I_{u_{k}}) \circ \Gamma''_{\alpha} \\
&- \sum_{\alpha \in \{0,1\}^{n-1}} q^3b_{\alpha}T' \circ (I_{v_{k}}\otimes \mychar{Fcross2.pdf} \otimes I_{u_{k}}) \circ \Gamma''_{\alpha},
\end{align*}
where $\Gamma''_{\alpha}$ are the $2^{n-1}$ resolutions of the tangle $T''$ containing $n-1$ crossings.

Using that composition is bilinear in $\mathbf{LWeb}$, we can rewrite the above sum as follows:
\begin{align*}
F(T) &= T' \circ(I_{v_{k}}\otimes q^2\mychar{Vertbarup.pdf} \otimes I_{u_{k}}) \circ \left( \sum_{\alpha \in \{0,1\}^{n-1}} b_{\alpha}  \Gamma''_{\alpha} \right)\\
&- T' \circ(I_{v_{k}}\otimes q^3\mychar{Fcross2.pdf} \otimes I_{u_{k}}) \circ \left(\sum_{\alpha \in \{0,1\}^{n-1}} b_{\alpha}  \Gamma''_{\alpha} \right)\\
&= T' \circ(I_{v_{k}}\otimes (q^2\mychar{Vertbarup.pdf} - q^3\mychar{Fcross2.pdf}) \otimes I_{u_{k}}) \circ \left( \sum_{\alpha \in \{0,1\}^{n-1}} b_{\alpha}  \Gamma''_{\alpha} \right)\\
&= F(T') \circ(I_{v_{k}}\otimes \phi(X_+) \otimes I_{u_{k}}) \circ F(T'').
\end{align*}
By the base case and inductive hypothesis, $\phi(T') = F(T')$ and  $\phi(T'') = F(T'')$. Moreover, by the functoriality of $\phi$, we obtain the following: 
$$F(T) =\phi(T') \circ (I_{v_{k}}\otimes \phi(X_+) \otimes I_{u_{k}}) \circ \phi(T'') = \phi(T' \circ (I_{v_{k}}\otimes X_+ \otimes I_{u_{k}}) \circ T'') = \phi(T).$$ 
Thus, the inductive step holds so by the principle of induction, $F = \phi$ and thus $F$ is a tensor-product-preserving functor. 
\end{proof}

\section*{Acknowledgments}
The author is grateful for the mentorship provided by Prof. Carmen Caprau during the research and writing of this paper, and for the NSF support through the RUI grant, DMS-2204386.





\begin{bibdiv}
\begin{biblist}


\bib{Cart}{article}{
	doi = {10.1142/9789814630627\_0001},
	author = { J. S. Carter },
	title = {Reidemeister/Roseman-Type Moves to Embedded Foams in 4-Dimensional Space},
	booktitle = {New Ideas in Low Dimensional Topology},
	pages = {1--30},
	url = {https://www.worldscientific.com/doi/abs/10.1142/9789814630627_0001}
}
 \bib{Crom}{book}{
	author={Cromwell, P. R.},
	title={Knots and Links},
	publisher={Cambridge University Press},
	date={2004}
}

 \bib{East}{article}{
          doi = {10.4171/JCA/82},
	   author={J. East},
	   title={Presentations for tensor categories},
	   year={2023},
	   volume = {8},
	number = {1/2},
	pages = {1--55},
	   journal={J. Comb. Algebra},
	 }

\bib{HOMFLY}{article}{
	doi = {},
	url = {},
	year = {1985},
	publisher = {},
	volume = {12},
	number = {2},
	pages = {239-246.},
	author = {P. Freyd et al.},
	title = {A new polynomial invariant of knots and links},
	journal = {Bull. Amer. Math. Soc.}
	}

\bib{Khov}{article}{
	doi = {10.2140/agt.2004.4.1045},
	url = {https://doi.org/10.2140\% 2Fagt.2004.4.1045},
	year = {2004},
	publisher = {Mathematical Sciences Publishers},
	volume = {4},
	number = {2},
	pages = {1045--1081},
	author = {M. Khovanov},
	title = {sl(3) link homology},
	journal = {Algebr. Geom. Topol.}
}

\bib{Kup}{article}{
	doi = {10.1007/BF02101184},
	url = {https://doi.org/10.1007/BF02101184},
	year = {1996},
	volume = {180},
	pages = {109--151},
	author = {G. Kuperberg},
	title = {Spiders for rank 2 Lie algebras},
	journal = {Comm. Math. Phys.}
}
 \bib{CWM}{book}{
   author={MacLane, S.},
   title={Categories for the Working Mathematician (2nd ed.)},
   series={Graduate Texts in Mathematics},
   volume={5},
   publisher={Springer-Verlag},
   address={New York},
   date={1998}
 }
 
 \bib{PT}{article}{
	doi = {},
	url = {},
	year = {1987},
	publisher = {},
	volume = {2},
	number = {},
	pages = {115--139},
	author = {J. H. Przytycki and P. Traczyk},
	title = {Invariants of links of Conway type},
	journal = {Kobe J. Math.}	
	}

 \bib{Resh}{article}{
 	doi = {https://doi.org/10.1007/BF02096491},
 	year = {1990},
 	publisher = {},
 	volume = {127},
 	number = {1},
 	pages = {1--26},
 	author = {N. Yu. Reshetikhin and V. G. Turaev},
 	title = {Ribbon graphs and their invaraints derived from quantum groups},
 	journal = {Commun. Math. Phys.}	
 }
 
\bib{Tur}{article}{
	doi = {10.1070/IM1990v035n02ABEH000711},
	url = {https://dx.doi.org/10.1070/IM1990v035n02ABEH000711},
	year = {1990},
	publisher = {},
	volume = {35},
	number = {2},
	pages = {411--444},
	author = {V. G. Turaev},
	title = {Operator invariants of tangles, and R-matrices},
	journal = {Mathematics of the USSR-Izvestiya}	
	}


\end{biblist}
\end{bibdiv}

\end{document}